\documentclass[11pt]{amsart}
\usepackage[T1]{fontenc}
\usepackage{amsmath,amssymb,amsthm,mathtools}
\usepackage[margin=1.05in]{geometry}
\usepackage{microtype,booktabs,array}
\usepackage[hidelinks]{hyperref}
\makeatletter
\@ifundefined{subjclassname@2020}{%
  \@namedef{subjclassname@2020}{\textup{2020} Mathematics Subject Classification}%
}{}
\makeatother

\newtheorem{theorem}{Theorem}[section]
\newtheorem{proposition}[theorem]{Proposition}
\newtheorem{lemma}[theorem]{Lemma}
\newtheorem{corollary}[theorem]{Corollary}
\theoremstyle{definition}
\newtheorem{definition}[theorem]{Definition}
\newtheorem{conjecture}[theorem]{Conjecture}
\theoremstyle{remark}
\newtheorem{remark}[theorem]{Remark}
\newtheorem{example}[theorem]{Example}
\newcommand{\CC}{\mathbb C}

\newcommand{\OO}{\mathcal O}
\newcommand{\mm}{\mathfrak m}
\newcommand{\nn}{\mathfrak n}
\newcommand{\ee}{\mathbf e}
\newcommand{\lgen}[1]{\lambda^{#1}_{\mathrm{gen}}}
\newcommand{\eHS}{e_{\mathrm{HS}}}
\newcommand{\jmult}{\mathrm j}
\DeclareMathOperator{\mult}{mult}
\DeclareMathOperator{\Frac}{Frac}
\DeclareMathOperator{\Min}{Min}

\DeclareMathOperator{\Spec}{Spec}
\DeclareMathOperator{\Tr}{Tr}

\DeclareMathOperator{\htop}{ht}

\numberwithin{equation}{section}
\hypersetup{
  pdftitle={Finite holomorphic pullbacks, Segre numbers, and generic Le numbers},
  pdfauthor={Z. Jelonek; V. H. Jorge Perez; R. N. Araújo dos Santos},
  pdfsubject={Finite holomorphic maps and invariants of hypersurface singularities}
}

\title[Finite pullbacks and Segre numbers]
{Finite holomorphic pullbacks, Segre numbers,\protect\\
and generic L\^e numbers}
\author{Z. Jelonek}
\address{Instytut Matematyczny, Polska Akademia Nauk,
\'Sniadeckich 8, 00-656 Warszawa, Poland}
\email{najelone@cyf-kr.edu.pl}
\author{V. H. Jorge Perez}
\address{Universidade de S{\~a}o Paulo, ICMC,
13560-970, S{\~a}o Carlos-SP, Brazil}
\email{vhjperez@icmc.usp.br}
\author{R. N. Araújo dos Santos}
\address{Universidade de S{\~a}o Paulo, ICMC,
13560-970, S{\~a}o Carlos-SP, Brazil}
\email{rnonato@icmc.usp.br}
\thanks{The first author is partially supported by Narodowe Centrum
Nauki, grant no.~2024/55/B/ST1/01412. The second author CNPq-Brazil, grant 304851/2025-6}
\subjclass[2020]{32S25, 32S15, 13H15, 13A30}
\keywords{Finite holomorphic map, Segre number, L\^e number,
Milnor number, multiplicity sequence, equimultiple ideal}
\date{}

\begin{document}

\begin{abstract}
We study numerical comparisons for hypersurface singularities under
finite holomorphic maps between smooth germs. We retain Jelonek's
topological proof of the Milnor-number inequality in the isolated case
and give an algebraic proof using a cyclic factorization, Brieskorn
modules, and trace. For a reduced divisor-theoretic pullback, we prove
lexicographic monotonicity of Segre numbers. Componentwise
monotonicity follows when the two singular loci have the same
dimension and the target Jacobian ideal is equimultiple. For
one-dimensional singular loci, we also obtain componentwise
comparison under an equality condition on the first nonzero Segre
number or a nonramification condition. The arguments use
finite-flat pullback of Segre cycles and Hilbert--Samuel
multiplicity. Through the identification with generic L\^e numbers,
the same results hold for these invariants. We distinguish these
statements from comparisons in arbitrary prepolar coordinates and
formulate the remaining componentwise and ramified cases as
conjectures.
\end{abstract}

\maketitle

\section{Introduction}

Let \(F:(\CC^n,0)\to(\CC^n,0)\) be a finite holomorphic map germ.
Giraldo and Roeder proved that the inverse image of a singular
hypersurface is singular \cite{GR}. For irreducible hypersurfaces
with isolated singularities, Jelonek obtained the quantitative
comparison
\[
\mu(h)\geq\mu(g)
\]
when \(V(h)=F^{-1}(V(g))_{\mathrm{red}}\)
\cite[Theorem~2.2]{Jelonek}. Segre numbers and generic L\^e numbers
provide natural numerical candidates for extending this comparison
to nonisolated singularities.

A necessary distinction is the one between a reduced inverse image
and a reduced pullback divisor. If \(g,h\) are reduced equations of
irreducible hypersurfaces and
\(V(h)=F^{-1}(V(g))_{\mathrm{red}}\), unique factorization gives
\begin{equation}\label{eq:standing-power}
g\circ F=u h^r,\qquad u\in\OO_{\CC^n,0}^{\times},\quad r\geq1.
\end{equation}
The pullback divisor is reduced precisely when \(r=1\). After
absorbing a root of \(u\) into \(h\), the chain rule gives
\[
J(h)\subseteq F^*J(g)\quad\text{if }r=1,
\qquad
h^{r-1}J(h)\subseteq F^*J(g)\quad\text{in general}.
\]
These inclusions have substantially different consequences.

The paper has three principal comparison results. First, in the
isolated case we present both Jelonek's topological argument and
an algebraic argument valid for every \(r\); when \(r=1\), the
stronger bound \(\mu(h)\geq (\deg F)\mu(g)\) follows directly
from Hilbert--Samuel multiplicity. Second, for \(r=1\), Segre
vectors are nondecreasing in lexicographic order, with no restriction
on the dimensions of the singular loci
(Theorem~\ref{thm:reduced-lex}). Third, if these dimensions agree
and \(J(g)\) is equimultiple, the comparison is componentwise
(Theorem~\ref{thm:equimultiple}). Additional componentwise cases
for one-dimensional singular loci appear in
Corollary~\ref{cor:curve-cases}.

The algebraic input is a finite-flat comparison for an arbitrary
ideal \(J\) in a smooth target:
\[
e_j(J)\leq e_j(F^*J)\leq (\deg F)e_j(J),
\qquad e_n(F^*J)=(\deg F)e_n(J).
\]
We prove this using compatible Segre cycles and a graph argument
for Hilbert--Samuel multiplicity. The final passage from
\(F^*J(g)\) to \(J(h)\) uses the lexicographic theorem of
Cid-Ruiz, Polini, and Ulrich \cite[Corollary~6.5]{CPU}; an ideal
inclusion alone does not give componentwise monotonicity.

We include precise definitions and indexing conventions for Segre
cycles, multiplicity sequences, and L\^e cycles. In particular,
the correspondence with Segre numbers concerns \emph{generic}
L\^e numbers, not arbitrary coordinate-dependent L\^e numbers.
The full componentwise statement, including higher ramification,
is left as a conjecture in Section~\ref{sec:conjectures}.

\section{Cycles and numerical invariants}\label{sec:definitions}

\subsection{Notation and multiplicity sequences}

Throughout the paper \(n\geq2\) is the ambient dimension and
\(d\) is the degree of \(F\). We use
\[
A=\CC\{y_1,\ldots,y_n\},\qquad
B=\CC\{x_1,\ldots,x_n\},
\]
with maximal ideals \(\mm\) and \(\nn\), for the target and source
rings. The \(A\)-algebra structure on \(B\) is induced by \(F\).
For a holomorphic germ \(f\), set
\[
J(f)=\left(\frac{\partial f}{\partial x_1},\ldots,
\frac{\partial f}{\partial x_n}\right),\qquad
\Sigma(f)=V(J(f)).
\]
After shrinking a representative, the critical locus of a reduced
germ through the origin is contained in its zero fiber. We always
compute the Jacobian invariants in the \emph{ambient} smooth ring,
not in \(\OO_{V(f),0}\).

For a finite module \(M\) and an ideal \(L\) with
\(M/LM\) of finite length, let \(\eHS(L;M)\) denote
Hilbert--Samuel multiplicity. The notation \(\eHS(R)\) means
multiplicity with respect to the maximal ideal of the local ring
\(R\), and \(\lambda_R\) denotes length.

Let \((R,\mathfrak a)\) be an \(n\)-dimensional local analytic
ring and \(0\ne Q\subsetneq R\). Put
\[
G_Q(R)=\bigoplus_{v\geq0}Q^v/Q^{v+1},\qquad
T=G_{\mathfrak a}(G_Q(R)).
\]
Thus
\[
T_{u,v}=
\frac{\mathfrak a^uQ^v+Q^{v+1}}
{\mathfrak a^{u+1}Q^v+Q^{v+1}}.
\]
The double cumulative Hilbert function is
\begin{equation}\label{eq:double-hilbert}
H_Q^{(1,1)}(u,v)=
\sum_{a=0}^u\sum_{b=0}^v\lambda_R(T_{a,b})
=\sum_{b=0}^v\lambda_R
\left(\frac{Q^b}{\mathfrak a^{u+1}Q^b+Q^{b+1}}\right).
\end{equation}
For sufficiently large \(u,v\), it agrees with a polynomial whose
homogeneous part of total degree \(n\) is
\begin{equation}\label{eq:multiplicity-sequence}
\sum_{i=0}^n\frac{c_i(Q;R)}{i!(n-i)!}\,u^i v^{n-i}.
\end{equation}
This defines the multiplicity sequence in the
\emph{dimension indexing} of Achilles--Manaresi \cite{AM97,AM}.
All its coefficients are nonnegative integers.

The analytic spread and equimultiplicity condition are
\[
\ell(Q)=\dim\left(\bigoplus_{v\geq0}Q^v/\mathfrak aQ^v\right),
\qquad Q\text{ equimultiple}\iff\ell(Q)=\htop Q.
\]
We use \(\ell(Q)\), rather than \(s(Q)\), to avoid confusion
with the dimension of a singular locus.

\begin{theorem}[Achilles--Manaresi]\label{thm:AM}
Let \(R\) be regular local analytic of dimension \(n\), let
\(0\ne Q\subsetneq R\), and set \(q=\dim R/Q\). Then
\begin{align}
c_i(Q;R)&=0\quad\text{if }i<n-\ell(Q)\text{ or }i>q,
\label{eq:AM-vanishing}\\
c_q(Q;R)&=
\sum_{\substack{\mathfrak p\in\Min(Q)\\\dim R/\mathfrak p=q}}
\eHS(QR_{\mathfrak p};R_{\mathfrak p})\,
\eHS(R/\mathfrak p).\label{eq:AM-assoc}
\end{align}
If \(Q\) is equimultiple, only \(c_q(Q;R)\) can be
nonzero. In particular, if \(Q\) is primary to the maximal
ideal, then
\[
(c_0(Q;R),\ldots,c_n(Q;R))=(\eHS(Q;R),0,\ldots,0).
\]
\end{theorem}

\begin{proof}
These are parts (1), (3), and (5) of \cite[Theorem~3]{AM}.
Regularity supplies the dimension formula required in the
associativity statement. Thus its highest-dimensional associated
primes are exactly the primes displayed in \eqref{eq:AM-assoc}.
\end{proof}

\subsection{Polar cycles and Segre cycles}

Let \((Z,0)\subset(\CC^N,0)\) be reduced and pure of
dimension \(m\), and let \(Q\subset\OO_{Z,0}\) define a
nowhere dense subspace. A cycle is a locally finite sum
\(C=\sum a_\alpha[Z_\alpha]\) of irreducible analytic subspaces
with integer coefficients. For an effective pure-dimensional
cycle through the origin, put
\[
\mult_0 C=\sum a_\alpha\,\eHS(\OO_{Z_\alpha,0}).
\]
For a Cartier divisor meeting a cycle properly, intersections
are taken with their scheme-theoretic multiplicities. On an
irreducible component of an intersection, the coefficient is the
length of the corresponding localized quotient ring.

Choose generators \(q_1,\ldots,q_a\) of \(Q\) and sufficiently
general constant linear combinations \(a_1,\ldots,a_m\) of
these generators. Starting with \(P_0=[Z]\), decompose successively
\begin{equation}\label{eq:segre-cycle-recursion}
P_{j-1}\cdot V(a_j)=P_j+\Lambda_j(Q,Z),\qquad 1\leq j\leq m.
\end{equation}
Here \(\Lambda_j(Q,Z)\) is the sum of the components supported
in \(V(Q)\), and \(P_j\) is the sum of the other components.
Both cycles retain their intersection multiplicities. The
underlying analytic set of \(P_j\) is
\[
\overline{\bigl(|P_{j-1}|\cap V(a_j)\bigr)\setminus V(Q)}.
\]
The cycles \(P_j\) and \(\Lambda_j\) are respectively polar
cycles and Segre cycles of dimension \(m-j\). Zero cycles in
the sense of the zero element of a cycle group are permitted.

\begin{definition}[Segre numbers]\label{def:segre}
The Segre numbers of \(Q\) at the origin are
\[
e_j(Q,Z)=\mult_0\Lambda_j(Q,Z),\qquad 1\leq j\leq m,
\]
for a sufficiently general choice in
\eqref{eq:segre-cycle-recursion}.
\end{definition}

Their values are independent of that general choice and of the
chosen generators; see \cite[Section~2]{GG}. We may therefore
use nonminimal generating sets. When the ambient ring \(R\)
is regular of dimension \(n\), Achilles and Rams give
\begin{equation}\label{eq:segre-c}
e_j(Q;R)=c_{n-j}(Q;R),\qquad 1\leq j\leq n;
\end{equation}
see \cite[Theorem~2]{AR} and \cite[Theorem~11]{AM}. In particular,
\(e_n(Q;R)=c_0(Q;R)=\jmult(Q;R)\), the \(j\)-multiplicity.

\subsection{L\^e cycles, admissible coordinates, and generic values}

Let \(f:(\CC^n,0)\to(\CC,0)\) be nonconstant, and let
\(\mathbf w=(w_1,\ldots,w_n)\) be local coordinates. The following
description is the cycle-theoretic construction of L\^e cycles
\cite[Lecture~3]{MasseyNotes}. Start with
\(\Gamma^n_{f,\mathbf w}=[\CC^n]\). For \(i=n-1,\ldots,0\),
decompose
\begin{equation}\label{eq:le-cycle-recursion}
\Gamma^{i+1}_{f,\mathbf w}\cdot
V\left(\frac{\partial f}{\partial w_{i+1}}\right)
=\Gamma^i_{f,\mathbf w}+\Lambda^i_{f,\mathbf w},
\end{equation}
where \(\Lambda^i_{f,\mathbf w}\) consists of the components
contained in \(\Sigma(f)\), while \(\Gamma^i_{f,\mathbf w}\)
consists of the remaining components. Each intersection must be
proper; the resulting cycles have dimension \(i\).

If the additional intersections at the origin are isolated, define
\begin{equation}\label{eq:le-number}
\lambda^i_{f,\mathbf w}(0)=
\bigl(\Lambda^i_{f,\mathbf w}\cdot V(w_1,\ldots,w_i)\bigr)_0,
\qquad 0\leq i\leq n-1.
\end{equation}
For \(i=0\), no coordinate hyperplane is imposed: the right side
is the coefficient of \([0]\) in \(\Lambda^0_{f,\mathbf w}\).
Coordinates satisfying these cycle and intersection conditions
will be called admissible here. In particular,
\(\lambda^i_{f,\mathbf w}(0)=0\) for \(i>\dim\Sigma(f)\).

Prepolar coordinates give a standard stratified-geometric way to
ensure the required properness. One fixes a good stratification
adapted to \(f\), and requires each successive coordinate
hyperplane to be transverse, away from the origin, to the induced
strata on the preceding coordinate slice; see \cite{Massey95}.
This is a transversality condition, not a statement that all the
numerical intersections equal their generic values. In particular,
``prepolar'' and ``generic'' must not be used interchangeably.

\begin{definition}[Generic L\^e numbers]\label{def:generic-le}
Fix linear coordinates \(\mathbf x\) on \(\CC^n\). There is a
nonempty Zariski-open subset \(\mathcal U_f\subset\mathrm{GL}_n(\CC)\)
on which the coordinates \(M\mathbf x\) are sufficiently general
and all the L\^e numbers at the origin have constant values.
Define
\begin{equation}\label{eq:generic-le}
\lgen{i}(f)=\lambda^i_{f,M\mathbf x}(0),\qquad
M\in\mathcal U_f,\quad 0\leq i\leq n-1.
\end{equation}
The open set may be shrunk without changing these values.
\end{definition}

Generic choices are understood in the usual local polar-variety
sense; see \cite{Massey95,GG}, and the distinction between
specified coordinates and generic values in \cite[Introduction]{Massey24}.
Changing the initial linear coordinates only translates the open
set in \(\mathrm{GL}_n(\CC)\), so the definition is independent
of that choice.

\begin{remark}[Generic L\^e numbers and Segre numbers]
\label{rem:generic-identification}
For the ambient Jacobian ideal,
\begin{equation}\label{eq:le-segre-c}
\lgen{i}(f)=e_{n-i}(J(f);\CC^n)=c_i(J(f)),
\qquad 0\leq i\leq n-1.
\end{equation}
The first equality identifies the generic L\^e construction with
the Segre construction for the gradient ideal; see
\cite[p.~346]{AM}, \cite{GG,Massey95}. The second equality is
\eqref{eq:segre-c}. This does not identify the L\^e numbers in
every admissible or prepolar coordinate system with Segre numbers.
\end{remark}

\begin{remark}[Isolated singularities]\label{rem:isolated-identification}
If \(\Sigma(f)=\{0\}\), then \(J(f)\) is a parameter ideal in
the regular local ring \(\OO_{\CC^n,0}\). Hence
\[
\mu(f)=\dim_\CC\OO_{\CC^n,0}/J(f)
=\eHS(J(f);\OO_{\CC^n,0}),
\]
and
\[
\lgen{0}(f)=e_n(J(f))=\mu(f),\qquad
\lgen{i}(f)=0\quad(i\geq1).
\]
In fact \(\lambda^0_{f,\mathbf w}(0)=\mu(f)\) in every
coordinate system for which the construction is defined
\cite[Example~3.11]{MasseyNotes}.
\end{remark}

\subsection{Units and lexicographic order}

\begin{lemma}[Independence of a unit factor]\label{lem:units}
If \(v\) is a holomorphic unit, then
\(\overline{J(vf)}=\overline{J(f)}\). Consequently the Segre
numbers and generic L\^e numbers of \(vf\) and \(f\) agree.
\end{lemma}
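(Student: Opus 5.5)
The plan is to compare the two Jacobian ideals directly and then pass to integral closures. Write \(\partial_i=\partial/\partial x_i\). By the product rule,
\[
\partial_i(vf)=v\,\partial_i f+f\,\partial_i v .
\]
Hence \(J(vf)\subseteq J(f)+(f)\). In the other direction, \(\partial_i f=v^{-1}\partial_i(vf)-v^{-1}f\,\partial_i v\), and \(f=v^{-1}(vf)\), so \(J(f)\subseteq J(vf)+(vf)\). The whole argument therefore reduces to showing that \(f\in\overline{J(f)}\) for every germ \(f\) vanishing at the origin. Applying this to \(f\) and to \(vf\) gives
\[
\overline{J(f)}=\overline{J(f)+(f)}=\overline{J(vf)+(vf)}=\overline{J(vf)}.
\]
Here we use that \(J(f)\subseteq J(f)+(f)\subseteq\overline{J(f)}\) forces \(\overline{J(f)+(f)}=\overline{J(f)}\).

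The key step, and the only nontrivial one, is the classical membership \(f\in\overline{J(f)}\). The standard fact is the stronger statement \(f\in\overline{\mm J(f)}\) (Briançon--Skoda / Teissier). I would prove it with the valuative (curve) criterion for integral closure in the analytic ring \(B\). For every holomorphic arc \(\gamma:(\CC,0)\to(\CC^n,0)\), we need \(\operatorname{ord}_t(f\circ\gamma)\geq\min_i\operatorname{ord}_t\bigl((\partial_i f)\circ\gamma\bigr)\). This follows from the chain rule
\[
\frac{d}{dt}(f\circ\gamma)=\sum_i(\partial_i f\circ\gamma)\,\gamma_i'(t).
\]
Since \(f(0)=0\), the left side has order \(\operatorname{ord}_t(f\circ\gamma)-1\) (or is identically zero if \(f\circ\gamma\equiv0\)). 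The right side has order at least \(\min_i\operatorname{ord}_t(\partial_i f\circ\gamma)\), so we even get a strict inequality. I expect this to be the main point to justify carefully, namely invoking the curve criterion for integral closure of ideals in \(\OO_{\CC^n,0}\). I would simply cite it as standard rather than reprove it.

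For the consequence, Segre numbers depend only on the integral closure of the ideal. I would justify this by noting that a reduction \(I\subseteq\overline I=\overline{I'}\) has the same multiplicity sequence (Achilles--Manaresi / Gaffney--Gassler: \(c_i\) are invariant under passing to a reduction, since \(G\)-theoretic Hilbert polynomials of \(I\) and of a reduction agree in top degree). Alternatively, general linear combinations of generators of \(I\) and of \(\overline I\) define the same Segre cycles, by \cite[Section~2]{GG}. Then \(e_j(J(vf))=e_j(J(f))\) for all \(j\), and Remark~\ref{rem:generic-identification} transfers the equality to the generic L\^e numbers \(\lgen{i}\). One minor point to check is that \(\Sigma(vf)=\Sigma(f)\) as sets, which is immediate from the equality of integral closures since the radicals agree.
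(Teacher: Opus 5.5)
Your proposal is correct and follows essentially the same route as the paper: the Leibniz rule together with the curve-criterion membership \(f\in\overline{J(f)}\) (the paper states the stronger \(f\in\overline{\mm J(f)}\)) gives the equality of integral closures, and invariance of Segre numbers under integral closure together with \eqref{eq:le-segre-c} gives the rest. Your observation that \(J(f)+(f)=J(vf)+(vf)\) already holds as ideals is a tidy way to package the symmetric step, which the paper handles by applying the same argument to \(v^{-1}\).
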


\begin{proof}
The analytic curve criterion gives
\(f\in\overline{\mm J(f)}\): along an analytic arc through
the origin, differentiating \(f\) along the arc proves the
required order inequality. By the Leibniz rule,
\(J(vf)\subseteq\overline{J(f)}\). Applying the same argument
to \(v^{-1}\) proves the reverse inclusion of integral closures.
Segre numbers are invariant under integral closure
\cite{GG,AR}; use \eqref{eq:le-segre-c} for generic L\^e numbers.
\end{proof}

For \(\mathbf a,\mathbf b\in\mathbb Z^n\), write
\(\mathbf a\geq_{\mathrm{lex}}\mathbf b\) if the vectors are
equal or their first unequal entries satisfy \(a_k>b_k\).
Set
\[
\ee(Q;R)=(e_1(Q;R),\ldots,e_n(Q;R)).
\]
Thus Segre vectors use increasing codimension, whereas the
corresponding L\^e vector is
\((\lgen{n-1}(f),\ldots,\lgen{0}(f))\).

\begin{theorem}[Cid-Ruiz--Polini--Ulrich]\label{thm:ideal-lex}
For proper nonzero ideals \(K\subseteq L\) of a regular local
analytic ring \(R\),
\[
\ee(K;R)\geq_{\mathrm{lex}}\ee(L;R).
\]
\end{theorem}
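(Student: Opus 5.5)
This theorem is quoted from \cite[Corollary~6.5]{CPU}, and in the paper its proof will be a citation. If I were to reconstruct it, I would argue one codimension at a time and compare the Segre cycles of \(K\) and \(L\) via the recursion \eqref{eq:segre-cycle-recursion}.

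First reduce to the case where the first entries agree. Let \(k\) be the first index with \(e_k(K)\neq e_k(L)\); if there is none, the vectors are equal and there is nothing to prove. For \(j<k\) the Segre cycles have matching multiplicities, and the task is to show \(e_k(K)>e_k(L)\).

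Choose the general elements compatibly. Since \(K\subseteq L\), a general element of \(K\) is also an element of \(L\). I would run the recursion for \(L\) using elements \(a_1,\dots,a_n\) that are general in \(L\), and for \(K\) using \(b_1,\dots,b_n\) general in \(K\). Then I would compare the polar cycles \(P_j(K)\) and \(P_j(L)\) through the intermediate family \(tb_j+(1-t)a_j\) and use the conservation of number.

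The key identity I would aim for is a degree-by-degree bookkeeping formula. Intersecting \([\Spec R]\) successively with \(j\) hypersurfaces gives a cycle of codimension \(j\) whose total multiplicity at the origin is
\[
\mult_0 P_j + \sum_{i\le j} (\text{contributions of } \Lambda_i \text{ cut down}).
\]
The plan is to show two things:
\begin{itemize}
\item the polar multiplicities \(\mult_0 P_j(K)\) dominate \(\mult_0 P_j(L)\) in the appropriate sense, because \(V(K)\supseteq V(L)\), so fewer components are discarded at each stage for \(L\);
\item once the earlier Segre numbers agree, the first discrepancy must go in the direction \(e_k(K)>e_k(L)\).
\end{itemize}

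An alternative route is algebraic. I would use the multiplicity sequence \(c_i\) via \eqref{eq:segre-c} and the bigraded Hilbert function \eqref{eq:double-hilbert}, comparing \(G_K(R)\) and \(G_L(R)\) through the Rees algebra map \(R[Kt]\to R[Lt]\).

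The main obstacle is the step from agreement in codimensions \(<k\) to strict inequality at codimension \(k\). The components of \(\Lambda_i(K)\) may lie in \(V(K)\setminus V(L)\), so they have no counterpart among the Segre components for \(L\). Controlling their contribution requires the refined polar/Segre comparison of \cite{CPU}, based on the general-element approach, rather than any naive argument from the inclusion.
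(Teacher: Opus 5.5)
You are right that the statement is imported from \cite[Corollary~6.5]{CPU}, and the paper proves it the same way. But the paper's short proof consists of exactly the two checks your proposal leaves out. First, the hypotheses of that corollary must be verified: a regular local analytic ring is a domain, hence equidimensional, and Cohen--Macaulay, hence universally catenary. Second, the conventions must be matched. \cite{CPU} indexes by codimension and carries a codimension-zero entry. So you need that its \(c_j\), \(j\geq1\), is the \(e_j\) of Definition~\ref{def:segre} (compare \eqref{eq:segre-c}), and that the codimension-zero entry vanishes for the nonzero ideals \(K,L\) of the domain \(R\). Only then is the cited inequality literally \(\ee(K;R)\geq_{\mathrm{lex}}\ee(L;R)\). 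This matters, because lexicographic order is not stable under reversing a vector. For \(K=(x^2)\subseteq L=(x^2,y)\) in \(\CC\{x,y\}\) one has \(\ee(K)=(2,0)\) and \(\ee(L)=(0,2)\), so the same comparison made in the dimension indexing \((c_0,c_1)\) of \eqref{eq:multiplicity-sequence} would be false.

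Your reconstruction should not be presented as a proof: neither bullet is established, and your last paragraph defers to \cite{CPU} anyway. Read literally, the first bullet fails. Its justification (fewer components are discarded for \(L\)) would, if anything, favour \(P_j(L)\): in the example above \(P_1(K)=0\) while \(\mult_0P_1(L)=1\). The underlying problem is that two separate recursions cannot be compared by interpolating the elements \(tb_j+(1-t)a_j\). The splitting in \eqref{eq:segre-cycle-recursion} is governed by \(V(Q)\), which jumps from \(V(L)\) to \(V(K)\), and conservation of number controls total intersection cycles, not how they split.

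The deformation idea can be made to work by running a single recursion on \(\CC^n\times\CC\) for the ideal generated by \(K\) and \(tL\), which is \(L\) for \(t\neq0\) and \(K\) at \(t=0\), and then restricting to \(t=0\). As long as the polar cycle of \(K\) is the specialization of the total polar cycle, semicontinuity of multiplicity gives \(e_i(K)\geq e_i(L)\). The excess comes from a multiplicity jump of the total Segre cycle at \(t=0\) and from components of the specialized polar cycle that fall into \(V(K)\)---precisely the components you flagged as the obstacle. After the first strict inequality this control may be lost, which is why only a lexicographic statement survives. This is the mechanism behind lexicographic upper semicontinuity of Segre numbers (cf.\ \cite{GG}).
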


\begin{proof}
Apply \cite[Corollary~6.5]{CPU}. A regular local analytic ring is
equidimensional and universally catenary. That paper uses
codimension indexing: its \(c_j\), for \(j\geq1\), is our
\(e_j\); its codimension-zero term is zero for the nonzero
ideals under consideration.
\end{proof}

\section{Finite-flat pullbacks and multiplicity}\label{sec:flat}

\begin{lemma}[Freeness and primary multiplicity]\label{lem:free}
Let \(F:(\CC^n,0)\to(\CC^n,0)\) be finite of degree \(d\).
Then \(B\) is free of rank \(d\) over \(A\). For every
\(\mm\)-primary ideal \(Q\subset A\),
\begin{equation}\label{eq:primary-degree}
\eHS(QB;B)=d\,\eHS(Q;A).
\end{equation}
More generally, the freeness assertion and
\eqref{eq:primary-degree} remain valid for a Cohen--Macaulay
source of dimension \(n\) and a smooth target, with \(d\)
interpreted as the generic module rank.
\end{lemma}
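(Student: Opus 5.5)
Freeness comes from the standard miracle flatness argument. Since \(F\) is finite, \(B\) is a finite \(A\)-module; this is the Weierstrass preparation/finiteness theorem, because \(B/\mm B\) has finite length exactly when the fiber over the origin is the origin. The ring \(B\) is regular, hence Cohen--Macaulay, of dimension \(n=\dim A\). A system of parameters \(y_1,\ldots,y_n\) of \(A\) maps to a system of parameters of \(B\): since \(B/\mm B\) has finite length, \(F^*y_1,\ldots,F^*y_n\) generate an \(\nn\)-primary ideal. By Cohen--Macaulayness this sequence is \(B\)-regular, so \(\operatorname{depth}_A B=n\). The Auslander--Buchsbaum formula over the regular ring \(A\) then gives \(\operatorname{pd}_A B=n-\operatorname{depth}_A B=0\). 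Hence \(B\) is a free \(A\)-module, necessarily of rank \(d\). Here \(d\) is the rank of \(B\otimes_A\Frac A\) over \(\Frac A\), i.e.\ the degree \([\Frac B:\Frac A]\). The same argument uses only that the source is Cohen--Macaulay of dimension \(n\) and the target is regular, so it proves the general assertion with \(d\) the generic rank.

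For \eqref{eq:primary-degree}, I compute lengths of quotients by powers. Since \(B\cong A^d\) as \(A\)-modules, \(B/Q^kB\cong (A/Q^k)^d\) as \(A\)-modules, so
\[
\lambda_A(B/Q^kB)=d\,\lambda_A(A/Q^k).
\]
I then need to compare \(\lambda_B\) with \(\lambda_A\) on finite-length \(B\)-modules. Both residue fields equal \(\CC\), and the finite map gives \(B/\nn\cong A/\mm\) as \(A\)-modules. Hence every simple \(B\)-module has \(A\)-length one, and a composition series argument gives \(\lambda_B(M)=\lambda_A(M)\) for finite-length \(B\)-modules \(M\). In the general Cohen--Macaulay source case, the source is a local analytic \(\CC\)-algebra, so its residue field is again \(\CC\).

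The ideal \(QB\) is \(\nn\)-primary, since \(\mm^s\subseteq Q\) gives \(\mm^sB\subseteq QB\) and \(\mm B\) is \(\nn\)-primary. Therefore
\[
\lambda_B(B/(QB)^k)=d\,\lambda_A(A/Q^k)\quad\text{for all }k.
\]
Both rings have dimension \(n\). Comparing the leading coefficients of the Hilbert--Samuel polynomials, multiplied by \(n!\), gives \(\eHS(QB;B)=d\,\eHS(Q;A)\).

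There is no serious obstacle. The only point requiring care is that \(F^*\mm\) generates an \(\nn\)-primary ideal, which is the definition of finiteness of a map germ. One also needs to state the length comparison explicitly via equal residue fields.
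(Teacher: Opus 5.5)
Your proposal is correct and follows the paper's proof essentially step for step. Finiteness and Cohen--Macaulayness make the images of a regular system of parameters of \(A\) a \(B\)-regular sequence, Auslander--Buchsbaum gives \(\operatorname{pd}_A B=0\), and \(B/Q^kB\cong(A/Q^k)^{\oplus d}\) together with the equality of \(A\)- and \(B\)-lengths (common residue field \(\CC\)) yields the multiplicity formula; you only make explicit what the paper leaves implicit, namely that \(QB\) is \(\nn\)-primary and the composition-series argument for the length comparison.
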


\begin{proof}
Since \(F\) is finite, \(B\) is finite over \(A\) and
\(\sqrt{\mm B}=\nn\). The images of a regular system of
parameters of \(A\) form a system of parameters of \(B\).
Cohen--Macaulayness makes it a \(B\)-regular sequence. Hence
\(\operatorname{depth}_A B=n\); the Auslander--Buchsbaum
formula over the regular ring \(A\) gives
\(\operatorname{pd}_A B=0\). A finite projective module over
the local ring \(A\) is free, and its rank is \(d\).

For every \(k\geq1\),
\[
B/Q^kB\cong(A/Q^k)\otimes_A B\cong(A/Q^k)^{\oplus d}
\]
as \(A\)-modules. Both residue fields are \(\CC\), so the
\(A\)-length and \(B\)-length of a finite-length \(B\)-module
are equal. Comparing the leading Hilbert--Samuel coefficients
proves \eqref{eq:primary-degree}.
\end{proof}

\begin{lemma}[A graph multiplicity bound]\label{lem:graph}
For every proper ideal \(Q\subset A\),
\begin{equation}\label{eq:graph-bound}
\eHS(A/Q)\leq\eHS(B/QB)\leq d\,\eHS(A/Q).
\end{equation}
\end{lemma}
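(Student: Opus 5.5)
Both inequalities in \eqref{eq:graph-bound} will be obtained by comparing the analytic germs \(X=V(Q)\subset(\CC^n,0)\) in the target and \(Y=F^{-1}(X)\) in the source, using the finite map \(F|_Y:Y\to X\) and the freeness from Lemma~\ref{lem:free}. Note that \(\eHS(A/Q)=\eHS(\mm;A/Q)\) and \(\eHS(B/QB)=\eHS(\nn;B/QB)\), and that \(\dim B/QB=\dim A/Q=:q\) because \(B\) is finite and flat (free) over \(A\).

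For the upper bound I would first pass from \(\nn\) to \(\mm B\). Since \(\mm B\subseteq\nn\) and both are primary to \(\nn\) on \(B/QB\),
\[
\eHS(\nn;B/QB)\leq\eHS(\mm B;B/QB).
\]
Next, by freeness, \(B/(Q+\mm^k)B\cong(A/(Q+\mm^k))^{\oplus d}\) as \(A\)-modules, and \(A\)-length equals \(B\)-length because both residue fields are \(\CC\). Hence the Hilbert--Samuel functions of \(\mm B\) on \(B/QB\) and of \(\mm\) on \((A/Q)^{\oplus d}\) agree. This gives \(\eHS(\mm B;B/QB)=d\,\eHS(A/Q)\), which is the right-hand inequality. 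It is the same computation as in Lemma~\ref{lem:free}, applied to the \(A\)-module \(A/Q\) instead of to \(A\).

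For the lower bound, which I expect to be the main obstacle, I would use the graph argument suggested by the lemma's name. Choose general linear forms \(\ell_1,\ldots,\ell_q\) in \(\mm\) that form a minimal reduction of \(\mm\) on \(A/Q\), so that \(\eHS(A/Q)=\eHS((\ell);A/Q)\). Their pullbacks \(\ell_i\circ F\) generate an ideal \(\mathfrak b\subseteq\nn\) that is primary on \(B/QB\), because \(F\) is finite. The difficulty is that \(\mathfrak b\) sits inside \(\nn\), so comparing it with \(\nn\) goes in the wrong direction. I would therefore argue geometrically. Write \(\eHS(B/QB)\) as a sum over the top-dimensional components \(Y_\beta\) of \(Y\), each weighted by its length \(\lambda(B_{\mathfrak p_\beta}/QB_{\mathfrak p_\beta})\), and similarly for the components \(X_\alpha\) of \(X\). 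By flatness (going down), each top-dimensional \(X_\alpha\) is the image of some top-dimensional \(Y_\beta\). A flat local homomorphism \(A_{\mathfrak p_\alpha}\to B_{\mathfrak p_\beta}\) has length multiplicativity, so
\[
\lambda(B_{\mathfrak p_\beta}/QB_{\mathfrak p_\beta})\geq\lambda(A_{\mathfrak p_\alpha}/QA_{\mathfrak p_\alpha}).
\]
It then suffices to show \(\mult_0Y_\beta\geq\mult_0X_\alpha\) for reduced irreducible germs with \(F(Y_\beta)=X_\alpha\). For this I would use the graph \(\Gamma\subset\CC^n\times\CC^n\) of \(F|_{Y_\beta}\). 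The graph is isomorphic to \(Y_\beta\), and a general linear projection realizing \(\mult_0Y_\beta\) can be combined with the coordinate projection to \(X_\alpha\). Multiplicity does not increase under a finite map onto its image, since the image of the tangent cone contains the tangent cone of the image and the degree of a general linear projection bounds the multiplicity. Concretely, \(\mult_0\Gamma=\mult_0Y_\beta\), and projecting \(\Gamma\) to the second factor gives \(\mult_0X_\alpha\leq\mult_0\Gamma\). I expect this last inequality to be the delicate point. I would handle it by choosing a general linear space \(L\) of codimension \(q\) through \(0\) in the target with \(\mult_0X_\alpha=(X_\alpha\cdot L)_0\). Then \((Y_\beta\cdot F^{-1}L)_0=\deg(F|_{Y_\beta})\,(X_\alpha\cdot L)_0\geq\mult_0X_\alpha\) by the projection formula. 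Since \(F^{-1}L\) is cut out by \(q\) equations lying in \(\nn\), this intersection number is at least \(\mult_0Y_\beta\cdot 1\) only if the direction of the bound is right. I would check carefully which inequality the graph construction actually yields, and fall back on the direct ideal estimate \(\eHS(\mathfrak b;B/QB)=d\,\eHS(A/Q)\) if needed.
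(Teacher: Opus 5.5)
Your upper bound is correct and is the paper's argument: \(\mm B\subseteq\nn\) together with freeness of \(B/QB\) over \(A/Q\) gives \(\eHS(\nn;B/QB)\leq\eHS(\mm B;B/QB)=d\,\eHS(A/Q)\).

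\textbf{The gap is in the lower bound.} You reduce it to a componentwise claim, \(\mult_0Y_\beta\geq\mult_0X_\alpha\) whenever \(F(Y_\beta)=X_\alpha\). This claim is false, even for a finite map of smooth \(n\)-dimensional germs. Take \(n=2\) and \(F(x_1,x_2)=(x_1^3,\,x_1^2+x_2)\), which is finite of degree \(3\). Take \(Q=(y_1^2-y_2^3)\), so \(X\) is a cusp and \(\eHS(A/Q)=2\). Then
\[
(y_1^2-y_2^3)\circ F=-x_2\,(x_2-c_+x_1^2)(x_2-c_-x_1^2),\qquad c_\pm=\tfrac{-3\pm\sqrt{-3}}{2}.
\]
So \(F^{-1}(X)\) consists of three smooth reduced branches, each mapping onto \(X\); for example \(t\mapsto(t^3,t^2)\) on \(x_2=0\). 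Every \(Y_\beta\) has multiplicity \(1<2\) and every length factor is \(1\). The lemma's inequality \(\eHS(B/QB)=3\geq2\) holds only because the three branches contribute together.

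The same example refutes your auxiliary claims:
\begin{enumerate}
\item It is not true that multiplicity cannot increase under a finite map onto its image.
\item Projecting the graph to the second factor does not give \(\mult_0X_\alpha\leq\mult_0\Gamma\): the graph of a branch is smooth, while its projection is the cusp.
\end{enumerate}
Your fallbacks also do not help. The projection-formula computation bounds both \(\mult_0X_\alpha\) and \(\mult_0Y_\beta\) from above by \((Y_\beta\cdot F^{-1}L)_0\), so it compares nothing. The estimate \(\eHS(\mathfrak b;B/QB)=d\,\eHS(A/Q)\) with \(\mathfrak b\subseteq\nn\) only reproves the upper bound. Keeping all \(\beta\) over \(\alpha\) with their lengths would require \(\eHS(B/\mathfrak p_\alpha B)\geq\eHS(A/\mathfrak p_\alpha)\). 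That is the lemma itself for a prime, so the component decomposition gains nothing.

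\textbf{The missing idea} is to apply the graph of \(F\) to the whole scheme rather than to components. Set \(T=\CC\{y,x\}/Q\CC\{y,x\}\) and \(t=\dim A/Q\). Then:
\begin{enumerate}
\item \(\dim T=t+n\), and \(\eHS(T)=\eHS(A/Q)\), since \(T\) is a product with a smooth factor.
\item \(T/(y_1-F_1(x),\ldots,y_n-F_n(x))\cong B/QB\).
\item Since \(\dim B/QB=t\), each of the \(n\) graph equations cuts dimension by exactly one.
\end{enumerate}
Now apply the elementary inequality \(\eHS(R/zR)\geq\eHS(R)\), valid for \(z\in\mathfrak a\) with \(\dim R/zR=\dim R-1\), at each step. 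That inequality follows from the exact sequence \(R/\mathfrak a^k\xrightarrow{\cdot z}R/\mathfrak a^{k+1}\to R/(\mathfrak a^{k+1}+zR)\to0\). It gives \(H_{R/(z)}(k)\geq H_R(k)-H_R(k-1)\), and comparing leading coefficients yields the claim.
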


\begin{proof}
Put \(t=\dim A/Q=\dim B/QB\), where equality follows from
finiteness and faithful flatness. We first record a parameter
inequality. If \((R,\mathfrak a)\) has dimension \(D>0\),
\(z\in\mathfrak a\), and \(\dim R/(z)=D-1\), then
\begin{equation}\label{eq:parameter-bound}
\eHS(R/(z))\geq\eHS(R).
\end{equation}
Indeed, the right-exact sequence
\[
R/\mathfrak a^k\xrightarrow{\cdot z}R/\mathfrak a^{k+1}
\longrightarrow R/(\mathfrak a^{k+1}+zR)\longrightarrow0
\]
gives
\(H_{R/(z)}(k)\geq H_R(k)-H_R(k-1)\), where
\(H_R(k)=\lambda_R(R/\mathfrak a^{k+1})\).
Comparison of leading coefficients proves
\eqref{eq:parameter-bound}, including the case \(D=1\).

Now set
\[
T=\CC\{y_1,\ldots,y_n,x_1,\ldots,x_n\}/
Q\CC\{y_1,\ldots,y_n,x_1,\ldots,x_n\}.
\]
Then \(\dim T=t+n\), \(\eHS(T)=\eHS(A/Q)\), and
\[
T/(y_1-F_1(x),\ldots,y_n-F_n(x))\cong B/QB.
\]
The \(n\) graph equations cut dimension by \(n\); every
successive quotient therefore cuts dimension by one. Applying
\eqref{eq:parameter-bound} successively proves the lower bound
in \eqref{eq:graph-bound}.

For the upper bound, use \(\mm B\subseteq\nn\) and the
fact that \(B/QB\) is a free \(A/Q\)-module of rank \(d\).
Length comparison gives
\[
\eHS(\nn;B/QB)\leq\eHS(\mm B;B/QB)
=d\,\eHS(\mm;A/Q),
\]
as required.
\end{proof}

\begin{proposition}[Pullback of Segre cycles]\label{prop:flat-cycles}
Let \(0\ne J\subsetneq A\), and put \(I=JB\). A common
sufficiently general sequence of linear combinations of generators
of \(J\), and their pullbacks, computes the Segre numbers of
\(J\) and \(I\). For these choices the cycles satisfy
\begin{equation}\label{eq:flat-cycles}
\Lambda_j(I,\CC^n)=F^*\Lambda_j(J,\CC^n),
\qquad 1\leq j\leq n,
\end{equation}
where \(F^*\) denotes flat pullback of cycles, including
multiplicities.
\end{proposition}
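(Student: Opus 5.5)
I will prove \eqref{eq:flat-cycles} by induction on \(j\), following the recursion \eqref{eq:segre-cycle-recursion} simultaneously in the target and the source. Since \(B\) is free over \(A\) (Lemma~\ref{lem:free}), \(F\) is flat, so flat pullback of cycles \(F^*\) is defined. It preserves dimension because \(F\) is finite and flat, so every fiber is zero-dimensional. It also commutes with intersection by a Cartier divisor: for an effective cycle \(C\) meeting \(V(a)\) properly,
\[
F^*(C\cdot V(a))=F^*C\cdot V(a\circ F).
\]
Indeed, for an integral \(W\) the scheme \(F^{-1}(W\cap V(a))\) equals \(F^{-1}(W)\cap V(F^*a)\). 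The lengths of localized quotient rings then transform correctly by flatness; this is the standard compatibility of flat pullback with intersection with Cartier divisors.

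Take generators \(q_1,\dots,q_s\) of \(J\); their pullbacks generate \(I=JB\). A choice of coefficients for \(a_1,\dots,a_n\) is sufficiently general for \(J\) on a nonempty Zariski-open set, and likewise for the pullbacks \(a_j\circ F\) and \(I\). Intersecting these two open sets gives a common general choice. Generator independence, as cited after Definition~\ref{def:segre}, lets us use the possibly nonminimal generating set \(\{q_i\circ F\}\).

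Now suppose inductively that \(P_{j-1}(I)=F^*P_{j-1}(J)\); this holds for \(j=0\), since \(F^*[\CC^n]=[\CC^n]\). Then
\[
P_{j-1}(I)\cdot V(a_j\circ F)=F^*\bigl(P_{j-1}(J)\cdot V(a_j)\bigr)
=F^*P_j(J)+F^*\Lambda_j(J).
\]
The remaining step is to check that this splitting is exactly the splitting into components inside and outside \(V(I)=F^{-1}(V(J))\).
\begin{itemize}
\item Every component of \(F^*\Lambda_j(J)\) lies over a component of \(\Lambda_j(J)\subseteq V(J)\), hence lies in \(F^{-1}V(J)=V(I)\).
\item Every component \(W'\) of \(F^*Z\), for \(Z\) an irreducible component of \(P_j(J)\), dominates \(Z\), since \(F\) is finite and flat and hence open and surjective onto \(Z\) over each component. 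As \(Z\not\subseteq V(J)\), we get \(W'\not\subseteq V(I)\).
\end{itemize}
Hence \(P_j(I)=F^*P_j(J)\) and \(\Lambda_j(I)=F^*\Lambda_j(J)\), which closes the induction. Properness of each intersection in the source follows from properness in the target together with dimension preservation.

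I expect the main obstacle to be the dominance claim in the second item. It requires that every irreducible component of \(F^{-1}(Z)\) map onto \(Z\). This is the going-down property for the flat extension \(A\to B\): each minimal prime of \(\mathfrak pB\) contracts to \(\mathfrak p\). Flatness gives going-down, so any minimal prime over \(\mathfrak pB\) contracts to a prime containing \(\mathfrak p\). By going-down and minimality, that contraction is exactly \(\mathfrak p\). The multiplicity compatibility can then be quoted from Fulton-type flat pullback results in the analytic local setting.
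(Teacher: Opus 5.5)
Your proposal is correct and follows essentially the same route as the paper: a common Zariski-open choice of coefficients, then induction along \eqref{eq:segre-cycle-recursion}. The induction uses that flat pullback commutes with Cartier-divisor intersection, and that every component of \(F^{-1}(Z)\) maps onto \(Z\), so the splitting into components inside and outside \(V(I)=F^{-1}(V(J))\) is preserved. Your going-down argument makes explicit the dominance step that the paper attributes to \(F\) being finite flat of relative dimension zero.
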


\begin{proof}
Use a fixed set of generators of \(J\) and its image in \(B\).
The good coefficient matrices for each Segre construction contain
a nonempty Zariski-open subset of the same matrix space. Their
intersection is nonempty. Choose a matrix in this intersection.

Flat pullback commutes with intersection by a Cartier divisor.
Moreover, a component of the inverse image of an irreducible
cycle component maps onto that component: the map is finite flat
of relative dimension zero. Such an inverse-image component is
contained in \(V(I)\) exactly when its image is contained in
\(V(J)\). Consequently flat pullback commutes with both parts
of the decomposition \eqref{eq:segre-cycle-recursion}.
Starting from \(F^*[\CC^n]=[\CC^n]\), induction gives
\(P_j(I)=F^*P_j(J)\) and \eqref{eq:flat-cycles}.
This argument concerns cycles, not merely their reduced supports;
ramification multiplicities are retained throughout.
\end{proof}

\begin{theorem}[Finite-flat Segre comparison]\label{thm:flat-segre}
Let \(0\ne J\subsetneq A\), let \(I=JB\), and let \(d=\deg F\).
Then
\begin{equation}\label{eq:flat-segre-bounds}
e_j(J;A)\leq e_j(I;B)\leq d\,e_j(J;A),
\qquad 1\leq j\leq n,
\end{equation}
and the last Segre number satisfies
\begin{equation}\label{eq:j-degree}
e_n(I;B)=d\,e_n(J;A).
\end{equation}
Equivalently,
\(c_i(J;A)\leq c_i(I;B)\leq d\,c_i(J;A)\), with
equality on the right for \(i=0\).
\end{theorem}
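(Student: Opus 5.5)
We use Proposition~\ref{prop:flat-cycles} to reduce the statement to a componentwise comparison of multiplicities of cycles, and then apply Lemma~\ref{lem:graph} to each component. Fix a common sufficiently general choice of linear combinations computing both Segre constructions. Then \(\Lambda_j(I,\CC^n)=F^*\Lambda_j(J,\CC^n)\) as cycles. Write \(\Lambda_j(J,\CC^n)=\sum_\alpha a_\alpha[Z_\alpha]\), with \(a_\alpha>0\) and \(Z_\alpha\) irreducible of dimension \(n-j\); only the components through the origin contribute to \(e_j\). Since \(F\) is finite, \(F^{-1}(0)=\{0\}\), so the components of \(F^*[Z_\alpha]\) passing through the origin of the source are exactly the germs at \(0\) of the inverse image. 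By linearity of \(\mult_0\) it therefore suffices to prove, for each prime \(\mathfrak p\subset A\) with \(\dim A/\mathfrak p=n-j\),
\[
\eHS(A/\mathfrak p)\leq\mult_0 F^*[V(\mathfrak p)]\leq d\,\eHS(A/\mathfrak p).
\]

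The key point is that the flat pullback cycle \(F^*[V(\mathfrak p)]\) is the fundamental cycle of the scheme \(B/\mathfrak pB\), which has pure dimension \(n-j\) by flatness and going down. Its multiplicity at the origin, computed as \(\sum_{\mathfrak q}\lambda(B_{\mathfrak q}/\mathfrak pB_{\mathfrak q})\,\eHS(B/\mathfrak q)\) over minimal primes \(\mathfrak q\) of \(\mathfrak pB\), equals \(\eHS(B/\mathfrak pB)\) by the associativity formula for Hilbert--Samuel multiplicity. Embedded components do not affect the top-dimensional count. Lemma~\ref{lem:graph} with \(Q=\mathfrak p\) then gives exactly the two displayed inequalities. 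Summing with the weights \(a_\alpha\) yields \eqref{eq:flat-segre-bounds}.

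For \eqref{eq:j-degree}, take \(j=n\). Then \(\Lambda_n(J)=a[0]\) with \(a=e_n(J;A)\), and \(F^*[0]\) is the cycle of \(B/\mm B\), which has length \(\dim_\CC B/\mm B=d\) by Lemma~\ref{lem:free}. Hence \(e_n(I;B)=d\,a\). If \(J\) is \(\mm\)-primary one can alternatively combine Lemma~\ref{lem:free} with Theorem~\ref{thm:AM}. The reformulation in terms of \(c_i\) follows from \eqref{eq:segre-c} with \(i=n-j\), applied in both regular rings \(A\) and \(B\).

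The main obstacle is identifying \(\mult_0\) of the flat pullback cycle with \(\eHS(B/\mathfrak pB)\). This requires care about the scheme structure: flat pullback assigns to each component \(\mathfrak q\) the length of \(B_{\mathfrak q}/\mathfrak pB_{\mathfrak q}\), which matches the associativity formula. It also requires checking that the generic choice of Proposition~\ref{prop:flat-cycles} keeps every component proper, so that no excess components appear.
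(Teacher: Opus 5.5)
Your proposal is correct and follows the paper's proof essentially step for step: compatible cycles from Proposition~\ref{prop:flat-cycles}, identification of \(F^*[V(\mathfrak p_\alpha)]\) with the fundamental cycle of \(B/\mathfrak p_\alpha B\) plus associativity to get \(e_j(I;B)=\sum_\alpha a_\alpha\,\eHS(B/\mathfrak p_\alpha B)\), Lemma~\ref{lem:graph} applied term by term, and \(\lambda_B(B/\mm B)=d\) for \(j=n\). Your extra remarks, that \(F^{-1}(0)=\{0\}\) and that \(B/\mathfrak pB\) is equidimensional of dimension \(n-j\), make explicit what the paper leaves implicit.
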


\begin{proof}
Choose compatible cycles as in Proposition~\ref{prop:flat-cycles}
and write
\[
\Lambda_j(J)=\sum_\alpha a_\alpha[V(\mathfrak p_\alpha)],
\qquad \dim A/\mathfrak p_\alpha=n-j.
\]
The flat pullback of \([V(\mathfrak p_\alpha)]\) is the
fundamental cycle of \(B/\mathfrak p_\alpha B\). By the
associativity formula for Hilbert--Samuel multiplicity,
\begin{align*}
e_j(J;A)&=\sum_\alpha a_\alpha\,\eHS(A/\mathfrak p_\alpha),\\
e_j(I;B)&=\sum_\alpha a_\alpha\,\eHS(B/\mathfrak p_\alpha B).
\end{align*}
Lemma~\ref{lem:graph}, applied term by term, proves
\eqref{eq:flat-segre-bounds}. For \(j=n\), the only possible
component downstairs is \([0]\), and its flat pullback has
multiplicity \(\lambda_B(B/\mm B)=d\). This gives
\eqref{eq:j-degree}.
\end{proof}

\begin{remark}[Why the degree formula does not hold at every index]
\label{rem:false-degree}
Freeness gives the expected factor \(d\) when the auxiliary
filtration upstairs is the \(\mm B\)-adic filtration.
It does not identify that filtration with the
\(\nn\)-adic filtration used in the definition of \(c_i(I;B)\).
Although \(\sqrt{\mm B}=\nn\), this does not imply that
\(\mm B\) is a reduction of \(\nn\).

For example, let \(F(u,v)=(u,v^d)\), with \(d>1\), and
\(J=(y_1)\). Then \(I=(u)\) and
\[
e_1(J;A)=e_1(I;B)=1,
\]
not \(e_1(I;B)=d\). Here \(\mm B=(u,v^d)\), whose
Hilbert--Samuel multiplicity is \(d\), whereas
\(\eHS(\nn;B)=1\). Thus these primary ideals cannot have
the same integral closure. The exact formula that survives for
all ideals is \eqref{eq:j-degree}, not an exact formula at
every index.
\end{remark}

\begin{lemma}[The first possible coefficient under inclusion]
\label{lem:first-coefficient}
Let \(R\) be regular local analytic of dimension \(n\), and
let \(K\subseteq L\) be proper nonzero ideals with
\(\dim R/K=\dim R/L=s\). Then
\begin{equation}\label{eq:first-coefficient}
c_s(K;R)\geq c_s(L;R).
\end{equation}
If \(L\) is equimultiple, then
\(c_i(K;R)\geq c_i(L;R)\) for every \(i\).
\end{lemma}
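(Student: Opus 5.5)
The plan is to use the associativity formula \eqref{eq:AM-assoc} of Theorem~\ref{thm:AM} for both ideals and compare term by term over the minimal primes of dimension \(s\). Since \(K\subseteq L\) and \(\dim R/K=\dim R/L=s\), every minimal prime \(\mathfrak p\) of \(L\) with \(\dim R/\mathfrak p=s\) contains \(K\). It therefore contains some minimal prime \(\mathfrak q\) of \(K\), and \(\dim R/\mathfrak q\geq s\). As \(\dim R/K=s\), this forces \(\mathfrak q=\mathfrak p\). Hence the index set for \(L\) in \eqref{eq:AM-assoc} is contained in the index set for \(K\), and the extra terms on the \(K\) side are nonnegative.

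For a common prime \(\mathfrak p\), the factor \(\eHS(R/\mathfrak p)\) is the same for both ideals. It then suffices to show
\[
\eHS(KR_{\mathfrak p};R_{\mathfrak p})\geq\eHS(LR_{\mathfrak p};R_{\mathfrak p}).
\]
Both \(KR_{\mathfrak p}\) and \(LR_{\mathfrak p}\) are primary to the maximal ideal of \(R_{\mathfrak p}\), because \(\mathfrak p\) is minimal over each. The inclusion \(K^k R_{\mathfrak p}\subseteq L^kR_{\mathfrak p}\) gives
\[
\lambda(R_{\mathfrak p}/K^kR_{\mathfrak p})\geq\lambda(R_{\mathfrak p}/L^kR_{\mathfrak p})
\]
for every \(k\). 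Comparing the leading coefficients of the two Hilbert--Samuel polynomials, which have the same degree \(\dim R_{\mathfrak p}\), yields the inequality. Summing over \(\mathfrak p\) proves \eqref{eq:first-coefficient}.

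For the equimultiple statement, Theorem~\ref{thm:AM} says that only \(c_s(L;R)\) can be nonzero. For \(i\neq s\) we then have \(c_i(L;R)=0\leq c_i(K;R)\), since all coefficients are nonnegative. For \(i=s\) the inequality is \eqref{eq:first-coefficient}.

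The only delicate step is the prime-matching argument. It uses only the equality of dimensions and the fact that a prime minimal over \(L\) is also minimal over \(K\) in this situation; regularity of \(R\) enters only through the applicability of Theorem~\ref{thm:AM}. I do not expect a genuine obstacle; the main care point is to note explicitly that the terms for primes minimal over \(K\) but not over \(L\) only increase \(c_s(K;R)\).
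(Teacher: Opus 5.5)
Your proposal is correct and follows essentially the same route as the paper: you identify the \(s\)-dimensional minimal primes of \(L\) as minimal primes of \(K\), compare the localized Hilbert--Samuel multiplicities at each such prime, apply the associativity formula \eqref{eq:AM-assoc} (extra terms for \(K\) are nonnegative), and use the vanishing part of Theorem~\ref{thm:AM} in the equimultiple case. Your dimension-based prime-matching argument just spells out in more detail the paper's one-line remark that the heights agree.
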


\begin{proof}
Every minimal prime of \(L\) with quotient dimension \(s\)
is a minimal prime of \(K\) with that dimension, because
\(K\subseteq L\) and their heights agree. At such a prime
\(\mathfrak p\), both localized ideals are primary and
\[
\eHS(KR_{\mathfrak p};R_{\mathfrak p})
\geq\eHS(LR_{\mathfrak p};R_{\mathfrak p}).
\]
Formula~\eqref{eq:AM-assoc}, together with the nonnegative
contributions of any additional components of \(V(K)\), proves
\eqref{eq:first-coefficient}. If \(L\) is equimultiple,
all its other coefficients vanish by Theorem~\ref{thm:AM}.
\end{proof}

\section{Isolated singularities: two proofs}\label{sec:isolated}

We first record the degree calculation used in the algebraic
argument. In this section a unit in \eqref{eq:standing-power}
is absorbed into \(h\), using Lemma~\ref{lem:units}, so that
\begin{equation}\label{eq:normalized-power}
g\circ F=h^r.
\end{equation}

\begin{lemma}[Cyclic factorization and degrees]\label{lem:cyclic}
Suppose \(g,h\) are irreducible and reduced and
\eqref{eq:normalized-power} holds. The analytic algebra
\[
C=A[z]/(z^r-g)
\]
embeds in \(B\) by \(z\mapsto h\), and \(B\) is finite over
\(C\). Moreover,
\begin{equation}\label{eq:field-degree}
[\Frac(B):\Frac(C)]=\frac d r.
\end{equation}
In particular \(r\mid d\). The finite map of reduced
hypersurfaces \(F|_{V(h)}:V(h)\to V(g)\) also has degree
\(d/r\).
\end{lemma}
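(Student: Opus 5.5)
We prove the lemma in four steps: the embedding, finiteness, the degree formula, and the statement about the hypersurfaces.

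\emph{Embedding and finiteness.} Define \(\varphi:A[z]\to B\) by \(z\mapsto h\), using the \(A\)-algebra structure from \(F\). By \eqref{eq:normalized-power}, \(z^r-g\) lies in the kernel, so \(\varphi\) factors through \(C\). Since \(g\) is irreducible and reduced, and \(n\geq2\) makes \(g\) a nonunit, \(z^r-g\) is irreducible over \(\Frac(A)\). If \(z^r-g=pq\) were a nontrivial factorization, comparing norms in \(\Frac(A)(g^{1/r})\) would force \(g\) to be, up to a unit, a proper power, contradicting reducedness. By Gauss's lemma over the UFD \(A\), the ring \(C\) is a domain of dimension \(n\), and it is free of rank \(r\) over \(A\). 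Both \(C\) and \(B\) are \(n\)-dimensional domains, and \(\ker(C\to B)\) is prime. If this kernel were nonzero, the image would have dimension less than \(n\). But \(B\) is finite over the image, since it is already finite over \(A\), and \(\dim B=n\). Hence the kernel is zero and \(C\hookrightarrow B\). Finiteness of \(B\) over \(C\) follows because \(B\) is finite over \(A\subseteq C\).

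\emph{Degree formula.} We use the tower \(\Frac(A)\subseteq\Frac(C)\subseteq\Frac(B)\). By Lemma~\ref{lem:free}, \([\Frac(B):\Frac(A)]=d\). Since \(C\) is free of rank \(r\) over \(A\), with \(z^r-g\) irreducible, \([\Frac(C):\Frac(A)]=r\). Multiplicativity of degrees gives \eqref{eq:field-degree}. As the middle degree is an integer, \(r\mid d\).

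\emph{Degree on the hypersurfaces.} This is the main obstacle, since the generic-rank argument must be moved to the height-one primes \(\mathfrak p=(g)\subset A\), \(\mathfrak q=(z)\subset C\), and \((h)\subset B\). Note that \(\mathfrak q\) is prime because \(C/(z)\cong A/(g)\).
\begin{itemize}
\item[(1)] \emph{Degree of \(C\) over \(A\) along \(\mathfrak q\).} We have \(C/zC\cong A/g\), so the residue field extension at \(\mathfrak q\) over \(\mathfrak p\) is trivial. The ramification index is \(r\), because \(gC_{\mathfrak q}=z^rC_{\mathfrak q}\).
\item[(2)] \emph{Degree of \(B\) over \(C\) along \(\mathfrak q\).} The element \(z\) maps to \(h\), and \(h\) is prime in \(B\). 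Moreover \((h)\) is the only height-one prime of \(B\) over \(\mathfrak q\), since \(V(h)=F^{-1}(V(g))_{\mathrm{red}}\) is irreducible. Hence \(zB_{(h)}=hB_{(h)}\), so the ramification index of \(B\) over \(C\) along \(\mathfrak q\) is \(1\).
\item[(3)] \emph{Degree formula at \(\mathfrak q\).} Localize at \(\mathfrak q\). Here \(C_{\mathfrak q}\) is a DVR, because \(C/zC\) is a domain and the prime \(\mathfrak q=(z)\) is principal. The ring \(B\otimes_C C_{\mathfrak q}\) is a semilocal Dedekind domain finite over \(C_{\mathfrak q}\), with a single maximal ideal lying over \(\mathfrak q\). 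The formula \(\sum e_if_i=[\Frac B:\Frac C]=d/r\) then gives \(f=d/r\).
\end{itemize}
The residue field degree \(f=[\Frac(B/h):\Frac(C/z)]=[\Frac(B/h):\Frac(A/g)]\) is exactly the degree of \(F|_{V(h)}\) as a finite map of irreducible reduced germs. Therefore \(\deg F|_{V(h)}=d/r\).

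A check on (2) and (3): without using \(C\) at all, the formula \(ef=d\) for \(B\) over \(A\) at \(\mathfrak p\) has ramification index \(r\) from \(g=h^r\). This gives \(f=d/r\) directly and confirms the computation.
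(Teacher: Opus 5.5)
Your proof is correct and follows essentially the same route as the paper: irreducibility of \(z^r-g\) (you use a norm/valuation argument where the paper applies Eisenstein over \(A_{(g)}\)), injectivity of \(C\to B\) and the tower law, and then an \(ef\)-count at the unique height-one prime \((h)\). The paper performs that count directly for \(A_{(g)}\to B_{(h)}\) with ramification index \(r\), which is exactly your closing check. Your main version over the discrete valuation ring \(C_{(z)}\), with \(e=1\), is an equivalent reformulation of it.
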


\begin{proof}
The ring \(A_{(g)}\) is a discrete valuation ring, and
\(z^r-g\) is Eisenstein over it. It is therefore irreducible
over \(\Frac(A)\), and \(C\) is a domain of rank \(r\) over
\(A\). Evaluation at \(h\) is injective: after passing to
\(\Frac(A)\) it is a nonzero map from a field into
\(\Frac(B)\). A finite set of \(A\)-module generators of
\(B\) also generates it over \(C\). The tower law now gives
\[
d=[\Frac(B):\Frac(A)]
=[\Frac(B):\Frac(C)]\,[\Frac(C):\Frac(A)],
\]
which proves \eqref{eq:field-degree}.

For the last assertion, localize the finite extension at \((g)\).
There is only one height-one prime over \((g)\), namely
\((h)\), and its ramification index is \(r\). The
free module of rank \(d\) over \(A_{(g)}\), reduced modulo
\(g\), has dimension \(d\) over the residue field. Computing
this dimension with the \(h\)-adic filtration of length \(r\)
gives
\[
d=r\,[\Frac(B/(h)):\Frac(A/(g))].
\]
The residual field degree on the right is the generic degree of
the hypersurface map. No formula involving a ratio of the
multiplicities of \(V(g)\) and \(V(h)\) is used here.
\end{proof}

\begin{theorem}[Jelonek's isolated-singularity comparison]
\label{thm:isolated}
Let \(F:(\CC^n,0)\to(\CC^n,0)\) be finite. Let \(g,h\)
be reduced equations of irreducible hypersurface germs with at most isolated singularities, where
\[
V(h)=F^{-1}(V(g))_{\mathrm{red}},\qquad
\]
Then
\begin{equation}\label{eq:milnor-bound}
\mu(h)\geq\mu(g).
\end{equation}
Equivalently, \(\eHS(J(h);B)\geq\eHS(J(g);A)\).
Consequently all Segre numbers, and all generic L\^e numbers,
are componentwise nondecreasing. This holds for every
\(r\geq1\) in \eqref{eq:standing-power}.
\end{theorem}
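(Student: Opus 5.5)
The plan is to handle two easy cases directly and then run a trace argument on Brieskorn modules, organised through the cyclic factorization of Lemma~\ref{lem:cyclic}.

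\emph{Reductions.} By Lemma~\ref{lem:units}, I normalize to \eqref{eq:normalized-power}, \(g\circ F=h^r\). If \(g\) is smooth then \(\mu(g)=0\) and there is nothing to prove. If \(r=1\), the chain rule gives \(J(h)\subseteq J(g)B\), and both ideals are \(\nn\)-primary. Monotonicity of Hilbert--Samuel multiplicity under inclusion, together with Lemma~\ref{lem:free}, then gives
\[
\mu(h)=\eHS(J(h);B)\geq\eHS(J(g)B;B)=d\,\mu(g)\geq\mu(g).
\]
The final sentence of the theorem follows from Remark~\ref{rem:isolated-identification}: when \(\Sigma(f)=\{0\}\), the Segre and generic L\^e vectors are \((0,\ldots,0,\mu)\). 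So everything reduces to proving \(\mu(h)\geq\mu(g)\) when \(r>1\). In that case only the inclusion \(h^{r-1}J(h)\subseteq J(g)B\) is available. The ideal on the left is not primary, so a direct multiplicity comparison fails.

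\emph{Main step for \(r>1\).} I will compare Brieskorn modules through the trace, using \(A\subset C=A[z]/(z^r-g)\subset B\) from Lemma~\ref{lem:cyclic}. Write \(H''_g=\Omega^n_A/dg\wedge d\Omega^{n-2}_A\), which is free of rank \(\mu(g)\) over \(\CC\{g\}\), and \(H''_h=\Omega^n_B/dh\wedge d\Omega^{n-2}_B\), which is free of rank \(\mu(h)\) over \(\CC\{h\}\). Since \(B\) is free over \(A\) (Lemma~\ref{lem:free}), there is a trace \(\Tr_{B/A}\colon\Omega^p_B\to\Omega^p_A\). I will use three properties of it:
\begin{itemize}
\item it commutes with \(d\);
\item it satisfies the projection formula \(\Tr(F^*\alpha\wedge\beta)=\alpha\wedge\Tr\beta\);
\item \(\Tr(F^*\omega)=d\,\omega\).
\end{itemize}
I factor it as \(\Tr_{C/A}\circ\Tr_{B/C}\). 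Because \(h=z\) lies in \(C\), the projection formula over \(C\) gives
\[
\Tr_{B/C}(h^k\,dh\wedge d\eta)=z^k\,dz\wedge d\Tr_{B/C}\eta .
\]
The cyclic structure then controls the next step: \(\Tr_{C/A}(z^j)=0\) unless \(r\mid j\), and \(\Tr_{C/A}(g^m)=r\,g^m\). Combining these, the part of \(dh\wedge d\Omega^{n-2}_B\) that survives the trace should land in \(dg\wedge d\Omega^{n-2}_A\), because \(z^{r-1}dz=dg/r\).

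\emph{Conclusion from a rank count.} Granting the previous step, \(\Tr\) induces a \(\CC\{t\}\)-linear map \(H''_h\to H''_g\) with \(t=h^r\mapsto g\). It is surjective, since \(\Tr(F^*\omega/d)=\omega\). As a \(\CC\{g\}\)-module, \(H''_h\) is free with basis \(h^k\omega_i\) for \(0\leq k<r\) and \(1\leq i\leq\mu(h)\), which gives only the weak bound \(r\,\mu(h)\geq\mu(g)\). To remove the factor \(r\), I intend to use the \(\mu_r\)-grading \(z\mapsto\zeta z\), which acts on \(C\) over \(A\) and commutes with \(\Tr_{C/A}\). The aim is to show that the trace kills the \(h^k\omega_i\) with \(k\not\equiv 0\) in a suitable weight sense. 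Then the surjection factors through a free \(\CC\{g\}\)-module of rank \(\mu(h)\), and comparing ranks gives \(\mu(h)\geq\mu(g)\).

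\emph{Expected obstacle.} The main difficulty is the weight bookkeeping in the last step. The forms \(\Tr_{B/C}\eta\) are not pulled back from \(A\), so the \(\mu_r\)-grading on \(\Omega_C\) must be tracked carefully. The same grading is also what the claim in the main step about the surviving part of \(dh\wedge d\Omega^{n-2}_B\) rests on. My first attempt would be to decompose \(\Omega^\bullet_C=\bigoplus_{k<r}z^k\,\Omega^\bullet_A\oplus z^k\,dz\wedge\Omega^{\bullet-1}_A\) and compute \(\Tr_{C/A}\) on each summand explicitly. If that fails, I would fall back on Jelonek's topological argument for the inequality \eqref{eq:milnor-bound} itself.
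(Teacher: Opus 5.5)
Your reductions are fine: the case $r=1$ is exactly Corollary~\ref{cor:isolated-degree}, and Remark~\ref{rem:isolated-identification} gives the Segre and L\^e statements. The trace computation that gives a $\CC\{t\}$-linear surjection $H''_h\to H''_g$, and hence $r\,\mu(h)\geq\mu(g)$, is also sound after localizing at $g$. Localization is needed because $C$ is singular at the origin and the weight components of $\Tr_{B/C}\eta$ can have poles along $z=0$.

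\textbf{The gap is the step meant to remove the factor $r$.} The $\mu_r$-action $z\mapsto\zeta z$ lives on $C$ over $A$. It does not act on $B$ in general, so $H''_h$ has no weight decomposition, and neither does the basis $h^k\omega_i$. What you actually have is
\[
\Tr_{B/A}(h^k\omega)=\Tr_{C/A}(z^k\,\Tr_{B/C}\omega).
\]
This is $r$ times $z^k$ times the weight-$(-k \bmod r)$ component of $\Tr_{B/C}\omega$, and nothing makes it vanish for $k\not\equiv0$.

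Concretely, take the $S_3$ reflection quotient $F\colon\CC^2\to\CC^2$, with $g$ the cuspidal discriminant, $h$ the product of the three reflecting lines, and $r=2$. This is a reducible instance, allowed by Remark~\ref{rem:isolated-scope}. Here $\det DF=c\,h$, so $\Tr(h\,dx_1\wedge dx_2)=c^{-1}\Tr F^*(dy_1\wedge dy_2)=6c^{-1}\,dy_1\wedge dy_2\neq0$ in $H''_g$. Yet $dx_1\wedge dx_2$ is part of a $\CC\{h\}$-basis of $H''_h$ by Nakayama, since $h$ is singular.

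More fundamentally, no weight bookkeeping over $A$ can close the argument. Put $Q_A=\mathcal B_g\otimes_{\CC\{t\}}K_t$. Surjectivity of the composite trace only says that the $K_z$-subspace $\operatorname{im}\Tr_{B/C}$ of $Q_C\cong\bigoplus_{k<r}z^kQ_A$ maps onto $Q_A$ under $\Tr_{C/A}$. That map is $r$ times the projection onto the weight-zero summand. A single line $K_z(q_0+zq_1+\cdots+z^{r-1}q_{r-1})$ already maps onto the $K_t$-span of $q_0,\dots,q_{r-1}$. So this information is compatible with $\mu(h)=\mu(g)/r$. What you need is surjectivity of $\Tr_{B/C}$ itself.

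\textbf{The missing idea is to stop the trace at $C$ and compare over $\CC\{z\}$.} Your own identity $\Tr_{B/C}(h^k\,dh\wedge d\eta)=z^k\,dz\wedge d\Tr_{B/C}\eta$ shows that $\Tr_{B/C}$ is $\CC\{z\}$-linear, with $z\mapsto h$. It also carries the denominator of $\mathcal B_h$ into $dz\wedge d\Omega^{n-2}$. Now invert $g$, $z$, $h$.
\begin{itemize}
\item The extension $A^\circ\subset C^\circ$ is finite \'etale.
\item $Q_B=\mathcal B_h\otimes_{\CC\{z\}}K_z$ has $K_z$-dimension $\mu(h)$.
\item \'Etale base change together with $dg=rz^{r-1}dz$ identifies $Q_C=\Omega^n_{C^\circ}/dz\wedge d\Omega^{n-2}_{C^\circ}$ with $Q_A\otimes_{K_t}K_z$, which has $K_z$-dimension $\mu(g)$.
\end{itemize}
For the finite map $\Phi\colon\Spec B^\circ\to\Spec C^\circ$ of degree $d/r$, one has $\Tr_\Phi\circ\Phi^*=(d/r)\,\mathrm{id}$ on $Q_C$. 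Hence $\Phi^*\colon Q_C\to Q_B$ is injective, and $\mu(g)\leq\mu(h)$.

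This is the paper's algebraic proof. The factor $r$ never appears because $\mathcal B_g$ is base-changed up to $\CC\{z\}$, which preserves its rank. Your version instead restricts $\mathcal B_h$ down to $\CC\{t\}$, which multiplies its rank by $r$. Your fallback, Jelonek's topological argument, is the paper's other proof and would suffice. But the main route you propose does not establish $\mu(h)\geq\mu(g)$ as written.
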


\begin{proof}[Topological proof (Jelonek)]
We give the argument of \cite[Theorem~2.2]{Jelonek} with field
coefficients explicit. Normalize as in
\eqref{eq:normalized-power}. Choose a finite proper
representative of \(F\). Choose a small target Milnor ball
\(B_\varepsilon\) and a source Milnor ball \(B'\) such that
\(F(B')\subset B_\varepsilon\). Next choose a target Milnor
ball \(B_\eta\), with \(\eta<\varepsilon\), such that
\(F^{-1}(B_\eta)\subset B'\).

For a sufficiently small nonzero \(t\), put \(c=t^r\) and
use the interiors of the corresponding Milnor fibers:
\[
M_g^- =g^{-1}(c)\cap B_\eta^{\circ},\qquad
M_g^+ =g^{-1}(c)\cap B_\varepsilon^{\circ},\qquad
M_h=h^{-1}(t)\cap (B')^{\circ}.
\]
The inclusion \(\iota:M_g^-\hookrightarrow M_g^+\) is a
homotopy equivalence. The space
\[
N=F^{-1}(M_g^-)\cap h^{-1}(t)
\]
is an open submanifold of \(M_h\), and
\(q=F|_N:N\to M_g^-\) is a nonempty finite proper map onto
the connected target. In particular it has positive degree.

A proper surjective holomorphic map between complex manifolds
of the same dimension induces an injection on cohomology with
\(\CC\)-coefficients \cite[Theorem~3.1]{Wells}. Equivalently,
one may use the positive-degree transfer with rational
coefficients. Thus \(q^*\) is injective. Let
\(k=F|_{M_h}:M_h\to M_g^+\) and let \(\jmath:N\hookrightarrow M_h\)
be the inclusion. The identity
\[
\jmath^*k^*=q^*\iota^*
\]
shows that \(k^*:H^{n-1}(M_g^+;\CC)\to H^{n-1}(M_h;\CC)\)
is injective, since \(\iota^*\) is an isomorphism.

By Milnor's bouquet theorem \cite{Milnor}, these vector spaces
have dimensions \(\mu(g)\) and \(\mu(h)\). This proves
\eqref{eq:milnor-bound}. Notice that \(N\) itself was not
assumed to be a Milnor fiber, and no surjectivity over
\(\mathbb Z\) is required.
\end{proof}

\begin{proof}[Algebraic proof via Brieskorn modules]
Normalize the unit and use the cyclic algebra \(C\) of
Lemma~\ref{lem:cyclic}. Set
\[
A^\circ=A[1/g],\qquad C^\circ=C[1/z],\qquad B^\circ=B[1/h],
\qquad \delta=d/r.
\]
The extension \(A^\circ\subset C^\circ\) is finite \'etale,
since \(rz^{r-1}\) is invertible. Both \(C^\circ\) and
\(B^\circ\) are regular, and the finite map
\(\Phi:\Spec B^\circ\to\Spec C^\circ\) has degree \(\delta\).

For an isolated hypersurface singularity \(f\), its Brieskorn
module is
\[
\mathcal B_f=
\frac{\Omega^n_{\CC^n,0}}
{df\wedge d\Omega^{n-2}_{\CC^n,0}}.
\]
Here and below \(\Omega^p\) means holomorphic differential
forms, with localization when indicated. Multiplication by
\(f\) defines the action of the parameter ring \(\CC\{t\}\).
The Brieskorn--Sebastiani theorem says that \(\mathcal B_f\)
is free over \(\CC\{t\}\) of rank \(\mu(f)\);
see \cite[Section~1]{Sabbah}.

Write \(K_t=\Frac\CC\{t\}\), \(K_z=\Frac\CC\{z\}\), with
\(t=z^r\), and define
\[
Q_C=\frac{\Omega^n_{C^\circ}}
{dz\wedge d\Omega^{n-2}_{C^\circ}},\qquad
Q_B=\frac{\Omega^n_{B^\circ}}
{dh\wedge d\Omega^{n-2}_{B^\circ}}.
\]
These are \(K_z\)-vector spaces. Localization gives
\[
Q_B\cong\mathcal B_h\otimes_{\CC\{z\}}K_z.
\]
Moreover \(C^\circ=A^\circ\otimes_{K_t}K_z\) and
\(dg=rz^{r-1}dz\). \'Etale base change for differential forms
therefore gives
\[
Q_C\cong
\bigl(\mathcal B_g\otimes_{\CC\{t\}}K_t\bigr)
\otimes_{K_t}K_z.
\]
To check the denominators, observe that
\[
dg\wedge d(a(z)\eta)=a(z)\,dg\wedge d\eta,
\qquad a(z)\in K_z,
\]
because \(dg\wedge dz=0\). The same calculation justifies
localization of each Brieskorn quotient. Consequently
\begin{equation}\label{eq:brieskorn-dimensions}
\dim_{K_z}Q_C=\mu(g),\qquad\dim_{K_z}Q_B=\mu(h).
\end{equation}

For a finite map between smooth spaces, the trace on
differential forms commutes with exterior differentiation,
satisfies the projection formula, and obeys
\[
\Tr_\Phi\circ\Phi^*=\delta\,\mathrm{id}.
\]
These are the trace identities for the de Rham complex; see
\cite[Section~50.19]{Stacks}. In the analytic setting the trace
is the sum over inverse branches off the branch locus. It
extends holomorphically across that locus: at a generic branch
point the local model is a power map in one transverse
coordinate, where summing the branches cancels the apparent
poles; extension across the remaining codimension-two set
follows from normality. Applied to localized forms, this
construction permits poles only along the deleted divisor.
No trace on ordinary forms at the possibly singular origin of
\(C\) is asserted.

Since \(\Phi^*z=h\), pullback and trace send the respective
denominators to one another:
\begin{align*}
\Phi^*(dz\wedge d\eta)&=dh\wedge d(\Phi^*\eta),\\
\Tr_\Phi(dh\wedge d\xi)&=dz\wedge d(\Tr_\Phi\xi).
\end{align*}
They descend to \(K_z\)-linear maps
\[
Q_C\xrightarrow{\Phi^*}Q_B\xrightarrow{\Tr_\Phi}Q_C
\]
whose composition is \(\delta\,\mathrm{id}\). Since
\(\delta\ne0\) in characteristic zero, \(\Phi^*\) is
injective. Equation~\eqref{eq:brieskorn-dimensions} proves
\(\mu(g)\leq\mu(h)\).

Finally, Remark~\ref{rem:isolated-identification} identifies
these Milnor numbers with Hilbert--Samuel multiplicities and
the only possibly nonzero Segre and generic L\^e numbers.
This proves all the numerical conclusions of the theorem.
\end{proof}

\begin{corollary}[A stronger bound for reduced pullback]
\label{cor:isolated-degree}
Under the hypotheses of Theorem~\ref{thm:isolated}, if
\(g\circ F=uh\), then
\begin{equation}\label{eq:isolated-degree-bound}
\mu(h)\geq d\,\mu(g).
\end{equation}
\end{corollary}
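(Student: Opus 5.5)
The plan is to reduce everything to the chain-rule inclusion for \(r=1\) and then apply Lemma~\ref{lem:free}. First, absorb the unit: by Lemma~\ref{lem:units}, replacing \(h\) by \(uh\) does not change \(\overline{J(h)}\), and hence does not change \(\mu(h)\). So we may assume \(g\circ F=h\). Differentiating with the chain rule gives
\[
\frac{\partial h}{\partial x_k}=\sum_{l=1}^n
\Bigl(\frac{\partial g}{\partial y_l}\circ F\Bigr)
\frac{\partial F_l}{\partial x_k},
\]
so \(J(h)\subseteq F^*J(g)B=J(g)B\). This is the inclusion recorded in the introduction for \(r=1\).

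Next I will use that both ideals are primary to \(\nn\). Since \(g\) has an isolated singularity, \(J(g)\) is \(\mm\)-primary. Its extension \(J(g)B\) is \(\nn\)-primary, because \(\sqrt{\mm B}=\nn\) by finiteness. Likewise \(J(h)\) is \(\nn\)-primary, because \(h\) has at most an isolated singularity. (If \(h\) were smooth, then \(J(h)=B\) would contain \(J(g)B\), forcing \(J(g)B=B\) and \(g\) smooth, so the bound holds trivially with \(\mu(g)=0\).) Since Hilbert--Samuel multiplicity is monotone under inclusion of primary ideals, we get
\[
\eHS(J(h);B)\geq\eHS(J(g)B;B).
\]

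Lemma~\ref{lem:free} then gives \(\eHS(J(g)B;B)=d\,\eHS(J(g);A)\). Finally, Remark~\ref{rem:isolated-identification} identifies \(\eHS(J(h);B)=\mu(h)\) and \(\eHS(J(g);A)=\mu(g)\), since Jacobian ideals of isolated singularities are parameter ideals in a regular ring. Chaining these yields \(\mu(h)\geq d\,\mu(g)\). Alternatively, one can read off the same step from \eqref{eq:j-degree} in Theorem~\ref{thm:flat-segre} applied to \(J=J(g)\), together with monotonicity of \(e_n\) for primary ideals.

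The only delicate point is the unit normalization. We need the Jacobian of \(uh\) to give the same Milnor number as that of \(h\). Lemma~\ref{lem:units} handles this via integral closure: multiplicity of a primary ideal depends only on its integral closure. The alternative route would be to take an \(r\)-th root of \(u\), which is trivial here since \(r=1\). Everything else is a direct combination of earlier results.
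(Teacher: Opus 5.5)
Your proposal is correct and matches the paper's proof: normalize the unit, use the chain-rule inclusion \(J(h)\subseteq J(g)B\) of \(\nn\)-primary ideals, apply monotonicity of Hilbert--Samuel multiplicity, and use Lemma~\ref{lem:free} to obtain \(\mu(h)=\eHS(J(h);B)\geq\eHS(J(g)B;B)=d\,\mu(g)\). One wording fix in your smooth case: what forces \(J(g)B=B\) is the inclusion \(B=J(h)\subseteq J(g)B\), not that \(J(h)\) contains \(J(g)B\).
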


\begin{proof}
After normalizing the unit, the chain rule gives
\(J(h)\subseteq J(g)B\). Both ideals are \(\nn\)-primary.
Monotonicity of Hilbert--Samuel multiplicity and
Lemma~\ref{lem:free} yield
\[
\mu(h)=\eHS(J(h);B)
\geq\eHS(J(g)B;B)=d\,\eHS(J(g);A)=d\,\mu(g).
\]
\end{proof}

\begin{remark}[Scope of the algebraic argument]\label{rem:isolated-scope}
For \(r>1\), the ideal \(h^{r-1}J(h)\) is not
\(\nn\)-primary. Thus the chain rule does not permit the
Hilbert--Samuel proof just given. The algebraic proof of
Theorem~\ref{thm:isolated} instead uses Brieskorn modules and
trace; it does not establish a factor \(d\), or \(d/r\),
in \eqref{eq:milnor-bound}.

Both proofs of \eqref{eq:milnor-bound} also apply to reduced,
not necessarily irreducible, \(g,h\) with isolated critical
points, provided the single-power identity
\(g\circ F=uh^r\) is assumed. In the algebraic proof,
Eisenstein may be applied at any irreducible factor of the
reduced germ \(g\). In the topological proof the Milnor
fibers are still connected. This observation will be useful
for transversal slices, which need not be irreducible.
\end{remark}

The theorem of Giraldo--Roeder \cite{GR} implies that a smooth
reduced inverse image cannot lie over a singular hypersurface.
Here we give our independent proof of this result:

\begin{theorem}\label{cormain-theorem} Let $({\bf V},0)\subset (\mathbb{C}^n,0)$ be a germ of a complex hypersurface and let $F: (\mathbb{C}^n,0)\to(\mathbb{C}^n,0)$ be a germ of a finite holomorphic mapping. Then if the germ $(F^{-1}({\bf V}),0)$ taken with the reduced structure is smooth, then also the germ 
$({\bf V}, 0)$ is smooth.
\end{theorem}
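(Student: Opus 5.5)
The plan is to reduce to the setting of Theorem~\ref{thm:isolated}, namely $g\circ F=uh^r$ with $h$ a smooth coordinate. I will then run Jelonek's transfer argument in every cohomological degree, without assuming that the singularity of $g$ is isolated, and conclude with A'Campo's theorem on the Lefschetz number of the monodromy.

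\emph{Step 1: reduction to a single power.} Since $(F^{-1}(\mathbf V),0)_{\mathrm{red}}$ is a smooth hypersurface germ, it is irreducible. Write it as $V(h)$, and choose source coordinates with $h=x_1$. Let $\mathbf V=\bigcup V_i$ be the decomposition into irreducible components. A finite germ between smooth $n$-dimensional germs is open and surjective. Hence each $F^{-1}(V_i)$ is a nonempty hypersurface contained in $V(h)$, so it equals $V(h)$, and $F(F^{-1}(V_i))=V_i$. It follows that all $V_i$ equal $F(V(h))$, so $\mathbf V=V(g)$ with $g$ irreducible and reduced. Unique factorization then gives \eqref{eq:standing-power}, and after absorbing the unit we have \eqref{eq:normalized-power}: $g\circ F=x_1^r$.

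\emph{Step 2: Milnor fibres.} I will repeat the topological proof of Theorem~\ref{thm:isolated} verbatim with the balls $B_\eta\subset B_\varepsilon$ and $B'$. Put $c=t^r$, and let $M_g^\pm$, $M_h$, $N$, $q$, $k$, $\iota$ and $\jmath$ be as there. None of these constructions uses isolatedness: Milnor's fibration theorem and the fact that $\iota$ is a homotopy equivalence hold for arbitrary $g$. The map $q:N\to M_g^-$ is still finite, proper and surjective between complex manifolds of dimension $n-1$. By \cite[Theorem~3.1]{Wells}, or by the rational transfer, $q^*$ is injective in \emph{every} degree, and the identity $\jmath^*k^*=q^*\iota^*$ gives injectivity of
\[
k^*:H^p(M_g^+;\CC)\to H^p(M_h;\CC)\qquad\text{for all }p .
\]
Now $M_h=\{x_1=t\}\cap(B')^\circ$ is a convex slice of a ball, hence contractible. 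Therefore $\tilde H^*(M_g;\CC)=0$, that is, the Milnor fibre of $g$ has the rational cohomology of a point.

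\emph{Step 3: conclusion.} Since $M_g$ is acyclic over $\CC$, the monodromy acts trivially on its cohomology, and its Lefschetz number equals $\chi(M_g)=1$. A'Campo's theorem states that if $g\in\mm^2$, i.e.\ $0$ is a singular point of $V(g)$, then the Lefschetz number of the monodromy is $0$. Hence $g\notin\mm^2$, so $J(g)$ contains a unit and $(\mathbf V,0)$ is smooth.

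I expect the main obstacle to be Step~2: checking that the Wells transfer and the ball-nesting argument survive when $\Sigma(g)$ is positive-dimensional. In particular, $N$ is an open subset of $M_h$ on which $q$ is proper, and $M_g^-$ must be connected. For connectedness, my first attempt is to use that the Milnor fibre of a reduced germ with $n\geq2$ is connected. A fallback for Step~3, if one wants to avoid A'Campo, would be the L\^e--Ramanujam-type statement that a germ whose Milnor fibre is acyclic is smooth, but A'Campo's Lefschetz-number theorem is the cleanest route.
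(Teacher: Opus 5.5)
Your proposal is correct, and it takes a genuinely different route from the paper.

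\emph{The paper's route.} The paper argues by induction on $n$. The base case, and more generally the case $\dim\mathrm{Sing}(\mathbf V)=0$, comes from Theorem~\ref{thm:isolated}: $\mu(g)\le\mu(h)=0$. When the singular locus is positive-dimensional, the paper slices by a generic hyperplane. A generic member of the induced linear system on $\mathbf V$ stays singular because it meets the singular locus. Its pullback stays smooth because $F^*\mathcal L$ is base-point free. This contradicts the statement in dimension $n-1$.

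\emph{Your route.} You use neither induction nor slicing. You run Jelonek's transfer argument in every degree to get
\[
H^*(M_g;\CC)\hookrightarrow H^*(M_h;\CC)
\]
for arbitrary, possibly nonisolated, $g$. The target is the cohomology of a convex slice of a ball, so the Milnor fibre of $g$ is $\CC$-acyclic. A'Campo's theorem then forces $dg(0)\neq0$. Note that the translation $g\in\mathfrak m^2\iff(\mathbf V,0)$ singular uses that $g$ is reduced, which your Step~1 supplies.

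\emph{What your approach buys.} It proves a stronger intermediate statement: for any $g\circ F=uh^r$, with any $r$ and any singular-locus dimension, the Betti numbers of Milnor fibres are nondecreasing in every degree. This contains the isolated inequality of Theorem~\ref{thm:isolated} as the top-degree case. It also avoids the local Bertini step, which the paper only sketches. In the paper's route one must check three things:
\begin{enumerate}
\item the slice of $\mathbf V$ is singular with its reduced structure;
\item the slice upstairs is smooth at a preimage point;
\item $F$ restricts there to a finite germ between smooth $(n-1)$-dimensional germs.
\end{enumerate}
Your Step~1 also makes explicit two facts the paper's base case tacitly needs in order to invoke Theorem~\ref{thm:isolated}: $\mathbf V$ is irreducible, and the single-power identity $g\circ F=uh^r$ holds.

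\emph{What it costs.} You import A'Campo's Lefschetz-number theorem, a deep result proved via embedded resolution. You also need the connectedness of Milnor fibres of reduced germs (Kato--Matsumoto), to guarantee that $q$ is surjective. The paper's argument uses only its own isolated-case comparison plus a Bertini-type genericity statement.
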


\begin{proof}
We proceed by induction with respect to $n.$ Note that if  ${\bf V}$ has an isolated singularity, then it  is smooth if and only if $\mu({\bf V})=0$. In particular in this case  our theorem follows directly from Theorem \ref{thm:isolated}. 
In particular we get the thesis if  $n=2.$  

Now assume that $n>2$ and ${\bf S}:=Sing({\bf V},0)$ has dimension greater than $0.$ Let $\mathcal{L}$ be a linear system of hyperplanes in $\mathbb{C}^n.$ On $V$ let us consider the induced linear system $\mathcal{L}_V$. 
Then a generic member $L_V$ of $\mathcal{L}_V$ is singular, because $S\cap L\not=\emptyset$ (see Lemma 2.5 in \cite{j-b}). Moreover $F^*(L_V)$ (and $F^{-1}(L)$) are  smooth because the system $F^*(\mathcal{L}_V)$ has no base points on smooth $W=F^{-1}(V)\subset F^{-1}(L)$ (the system $F^*(\mathcal{L})$ has no base points on $\mathbb{C}^n$). Since dim $L< n$, this is a contradiction.
\end{proof}

\section{Reduced pullback and nonisolated singularities}
\label{sec:reduced}

Unless otherwise specified, throughout this section
\(g,h\) define singular irreducible reduced hypersurfaces and
\begin{equation}\label{eq:reduced-hypothesis}
g\circ F=uh.
\end{equation}
Use Lemma~\ref{lem:units} to replace \(h\) by \(uh\), and set
\[
J=J(g)\subset A,\qquad I=JB\subset B,\qquad K=J(h)\subset B.
\]
The chain rule gives
\begin{equation}\label{eq:chain-rule}
\nabla h=(DF)^{\mathsf T}(\nabla g)\circ F,
\qquad K\subseteq I.
\end{equation}
Although we use a normalized equation in proofs, all conclusions
about Segre numbers and generic L\^e numbers hold for the original
equation.

\begin{theorem}[Lexicographic monotonicity for reduced pullback]
\label{thm:reduced-lex}
Under \eqref{eq:reduced-hypothesis},
\begin{equation}\label{eq:reduced-lex}
\ee(J(h);B)\geq_{\mathrm{lex}}\ee(J(g);A).
\end{equation}
No equality of the dimensions of \(\Sigma(g)\) and
\(\Sigma(h)\) is required. Equivalently,
\[
\bigl(\lgen{n-1}(h),\ldots,\lgen{0}(h)\bigr)
\geq_{\mathrm{lex}}
\bigl(\lgen{n-1}(g),\ldots,\lgen{0}(g)\bigr).
\]
\end{theorem}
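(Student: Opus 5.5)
We combine the chain-rule inclusion \eqref{eq:chain-rule} with the two comparison results already available. After normalizing the unit as in \eqref{eq:reduced-hypothesis} (justified by Lemma~\ref{lem:units}, which shows that Segre numbers of \(J(h)\) do not change), we have \(K=J(h)\subseteq I=J(g)B\). Both ideals are proper and nonzero in the regular local analytic ring \(B\). Here \(h\) is singular, so \(K\subseteq\nn\); \(K\ne0\) because \(h\) is nonconstant; and \(I\) is proper because \(J(g)\subseteq\mm\), \(g\) being singular. Hence Theorem~\ref{thm:ideal-lex} applies in \(B\) and gives
\[
\ee(J(h);B)\geq_{\mathrm{lex}}\ee(I;B).
\]
No condition on the dimensions of \(V(K)\) and \(V(I)\) enters here, which is why none is needed in the statement.

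The second step compares \(\ee(I;B)\) with \(\ee(J;A)\). Theorem~\ref{thm:flat-segre} gives the componentwise inequality \(e_j(I;B)\geq e_j(J;A)\) for every \(1\le j\le n\), and componentwise domination implies lexicographic domination. Since \(\geq_{\mathrm{lex}}\) is transitive (it is a total order on \(\mathbb Z^n\)), we chain the two inequalities and obtain \eqref{eq:reduced-lex}.

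Finally, the L\^e-number formulation follows by rewriting both vectors with \eqref{eq:le-segre-c}. Since \(e_j=\lgen{n-j}\), the Segre vector \((e_1,\ldots,e_n)\) is exactly \((\lgen{n-1},\ldots,\lgen{0})\), in the same order, so the lexicographic comparison transfers verbatim. Lemma~\ref{lem:units} again ensures that the normalized and original equations give the same values.

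The only delicate point is checking the hypotheses of Theorem~\ref{thm:ideal-lex}: that ideal must be proper and nonzero, and it lives in the source ring \(B\) rather than \(A\). Both are immediate from the standing assumptions that \(g\) and \(h\) are singular. The main conceptual obstacle has been outsourced to the Cid-Ruiz--Polini--Ulrich theorem, since an inclusion does not give componentwise monotonicity. I expect no further difficulty, provided the flat comparison of Theorem~\ref{thm:flat-segre} is applied with the same indexing \(e_j\) used in Theorem~\ref{thm:ideal-lex}.
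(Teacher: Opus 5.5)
Your proposal is correct and matches the paper's proof. The paper likewise applies Theorem~\ref{thm:ideal-lex} to \(K\subseteq I\), takes the componentwise bounds \(e_j(I;B)\geq e_j(J;A)\) from Theorem~\ref{thm:flat-segre}, concludes by transitivity of \(\geq_{\mathrm{lex}}\), and translates with \eqref{eq:le-segre-c}. Your explicit check that \(K\) and \(I\) are proper and nonzero is a harmless addition; for \(I\neq0\), note that \(J\neq0\) and \(B\) is faithfully flat over \(A\).
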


\begin{proof}
By \eqref{eq:chain-rule} and Theorem~\ref{thm:ideal-lex},
\(\ee(K;B)\geq_{\mathrm{lex}}\ee(I;B)\).
Theorem~\ref{thm:flat-segre} gives the componentwise inequalities
\(e_j(I;B)\geq e_j(J;A)\), and hence
\(\ee(I;B)\geq_{\mathrm{lex}}\ee(J;A)\).
Transitivity proves \eqref{eq:reduced-lex}; then apply
\eqref{eq:le-segre-c}.
\end{proof}

\begin{proposition}[The highest-dimensional singular contribution]
\label{prop:top-dimensional}
Assume \eqref{eq:reduced-hypothesis} and
\(\dim\Sigma(g)=\dim\Sigma(h)=s\). Then all coefficients
\(c_i(J(g))\) and \(c_i(J(h))\) with \(i>s\) vanish, and
\begin{equation}\label{eq:s-sandwich}
c_s(J(h);B)\geq c_s(I;B)\geq c_s(J(g);A).
\end{equation}
In Segre and L\^e notation this gives
\[
e_{n-s}(J(h))\geq e_{n-s}(J(g)),\qquad
\lgen{s}(h)\geq\lgen{s}(g).
\]
\end{proposition}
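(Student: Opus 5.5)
The plan is to reduce everything to Theorem~\ref{thm:AM}, Lemma~\ref{lem:first-coefficient}, and Theorem~\ref{thm:flat-segre}.

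\textbf{Vanishing above \(s\).} For the ambient Jacobian ideals, \(\dim A/J(g)=\dim\Sigma(g)=s\) and \(\dim B/J(h)=\dim\Sigma(h)=s\). Hence \eqref{eq:AM-vanishing} with \(q=s\) gives \(c_i(J(g);A)=0\) and \(c_i(J(h);B)=0\) for \(i>s\). These ideals are proper and nonzero because \(g,h\) are singular and nonconstant.

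\textbf{Dimension of \(I\).} The extension \(A\to B\) is finite and faithfully flat by Lemma~\ref{lem:free}, so \(\dim B/I=\dim A/J=s\). This is the equality recorded in the proof of Lemma~\ref{lem:graph}. The ideal \(I\) is nonzero and proper, since \(J\subseteq\mm\) and \(\mm B\subseteq\nn\).

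\textbf{Left inequality.} The chain rule \eqref{eq:chain-rule} gives \(K=J(h)\subseteq I\), and both quotients have dimension \(s\). Lemma~\ref{lem:first-coefficient}, applied in the regular ring \(B\) with \(K\subseteq L=I\), then yields \(c_s(J(h);B)\geq c_s(I;B)\).

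\textbf{Right inequality.} Theorem~\ref{thm:flat-segre}, in the \(c\)-indexing, gives \(c_i(J;A)\leq c_i(I;B)\) for every \(i\); take \(i=s\).

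\textbf{Translation.} By \eqref{eq:segre-c}, \(c_s=e_{n-s}\), which gives \(e_{n-s}(J(h))\geq e_{n-s}(J(g))\). Remark~\ref{rem:generic-identification} then gives \(\lgen{s}(h)\geq\lgen{s}(g)\). Lemma~\ref{lem:units} transfers these statements from the normalized \(h\) back to the original equation.

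One index case needs care. Equation \eqref{eq:segre-c} requires \(1\leq n-s\leq n\), i.e.\ \(s\leq n-1\), and this holds because \(\Sigma(g)\subseteq V(g)\) is a proper subset. It also requires \(s\geq0\), which holds because \(g,h\) are singular at the origin.

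The only point needing care is the dimension bookkeeping for \(I=JB\). Lemma~\ref{lem:first-coefficient} requires equal dimensions of \(B/K\) and \(B/I\), and this is exactly what finiteness and flatness supply. Everything else is a direct citation.
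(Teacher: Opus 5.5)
Your proposal is correct and is essentially the paper's own proof. The vanishing comes from Theorem~\ref{thm:AM} with \(q=s\), finiteness and faithful flatness give \(\dim B/I=s\), Lemma~\ref{lem:first-coefficient} applied to \(K\subseteq I\) gives the first inequality, and Theorem~\ref{thm:flat-segre} gives the second; your extra checks (the ideals are proper and nonzero, \(0\leq s\leq n-1\) for \eqref{eq:segre-c}, and Lemma~\ref{lem:units} for the unit) are bookkeeping the paper leaves implicit.
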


\begin{proof}
Finiteness and faithful flatness give \(\dim B/I=s\).
The vanishing follows from Theorem~\ref{thm:AM}.
Apply Lemma~\ref{lem:first-coefficient} to \(K\subseteq I\)
for the first inequality in \eqref{eq:s-sandwich}, and
Theorem~\ref{thm:flat-segre} for the second.
\end{proof}

\begin{theorem}[Equimultiple target in arbitrary singular dimension]
\label{thm:equimultiple}
Assume
\[
g\circ F=uh,\qquad
\dim\Sigma(g)=\dim\Sigma(h)=s,
\qquad \ell(J(g))=\htop J(g)=n-s.
\]
Then
\begin{equation}\label{eq:equimultiple-componentwise}
e_j(J(h);B)\geq e_j(J(g);A),\qquad 1\leq j\leq n,
\end{equation}
and
\begin{equation}\label{eq:equimultiple-le}
\lgen{i}(h)\geq\lgen{i}(g),\qquad 0\leq i\leq n-1.
\end{equation}
No equimultiplicity assumption on \(J(h)\) is necessary.
If \(J(h)\) is also equimultiple, both multiplicity sequences
are supported only at index \(s\).
\end{theorem}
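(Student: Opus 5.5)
We reduce to the top index \(s\) and then use that every other coefficient of \(J(g)\) vanishes. By Lemma~\ref{lem:units} we may replace \(h\) by \(uh\), so \(g\circ F=h\). We set \(J=J(g)\), \(I=JB\) and \(K=J(h)\), and by \eqref{eq:chain-rule} we have \(K\subseteq I\). Because \(J\) is equimultiple with \(\ell(J)=\htop J=n-s\) and \(\dim A/J=s\), Theorem~\ref{thm:AM} forces \(c_i(J;A)=0\) for every \(i\neq s\). In Segre indexing this reads \(e_j(J;A)=0\) for \(j\neq n-s\), by \eqref{eq:segre-c}.

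Consider first the indices \(j\neq n-s\). There the right-hand side of \eqref{eq:equimultiple-componentwise} is zero. The left-hand side \(e_j(K;B)=c_{n-j}(K;B)\) is a nonnegative integer, since multiplicity sequences have nonnegative coefficients (equivalently, Segre cycles are effective). The inequality therefore holds trivially, and we need no information about the possibly non-equimultiple ideal \(J(h)\).

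Now take the index \(j=n-s\). Since \(\dim\Sigma(g)=\dim\Sigma(h)=s\), Proposition~\ref{prop:top-dimensional} gives \(c_s(J(h);B)\geq c_s(I;B)\geq c_s(J(g);A)\). The first step of that chain is Lemma~\ref{lem:first-coefficient} applied to \(K\subseteq I\); the hypothesis \(\dim B/I=s\) holds because \(F\) is finite and flat. The second step is Theorem~\ref{thm:flat-segre}. Together the two cases give \eqref{eq:equimultiple-componentwise}. Translating through Remark~\ref{rem:generic-identification}, with \(\lgen{i}=e_{n-i}=c_i\), gives \eqref{eq:equimultiple-le}. For \(i=n-1,\dots,0\) this uses \(j=n-i\in\{1,\dots,n\}\), so every index is covered.

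For the final sentence, suppose \(J(h)\) is also equimultiple. Its height is \(n-s\) because \(\dim\Sigma(h)=s\), so Theorem~\ref{thm:AM} shows that its sequence is supported only at \(i=s\), just as for \(J(g)\).

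The main point to check is that the reduction really does not require equimultiplicity of \(J(h)\). That rests on two facts. First, the nonnegativity of all \(c_i(K;B)\) is supplied by the definition, since the coefficients are nonnegative integers. Second, Lemma~\ref{lem:first-coefficient} needs only equality of the dimensions, not equimultiplicity of the larger ideal \(I\). Both conditions hold under the stated hypotheses, so we expect no real obstacle beyond the bookkeeping between the Segre, \(c_i\) and Lê indexings.
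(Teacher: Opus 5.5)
Your proposal is correct and follows essentially the same route as the paper. You use Theorem~\ref{thm:AM} to show that the target sequence vanishes off index \(s\), use nonnegativity at the other indices, and apply Proposition~\ref{prop:top-dimensional} at index \(s\); you then translate via \eqref{eq:segre-c} and \eqref{eq:le-segre-c}, and apply the same vanishing theorem to \(J(h)\) for the final sentence.
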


\begin{proof}
Theorem~\ref{thm:AM} gives \(c_i(J(g))=0\) for \(i\ne s\).
At index \(s\), Proposition~\ref{prop:top-dimensional}
gives the required inequality. At every other index,
nonnegativity of \(c_i(J(h))\) suffices. Equations
\eqref{eq:segre-c} and \eqref{eq:le-segre-c} translate the result.
If \(J(h)\) is equimultiple, apply the same vanishing theorem
to that ideal.
\end{proof}

\begin{remark}[Analytic spread under finite-flat extension]
The intermediate ideal \(I=JB\) has the same
analytic spread as \(J\). Indeed, flatness identifies
\[
I^k/\nn I^k\cong J^k/\mm J^k
\]
for every \(k\), compatibly with products. Thus the special
fiber rings are isomorphic over \(\CC\). In particular,
equimultiplicity of \(J\) implies equimultiplicity of \(I\),
but not of the smaller ideal \(K\).
\end{remark}

\subsection{One-dimensional singular loci}

Suppose now that
\begin{equation}\label{eq:curve-hypothesis}
g\circ F=uh,\qquad\dim\Sigma(g)=\dim\Sigma(h)=1.
\end{equation}
Only \(c_1\) and \(c_0\) can be nonzero. Equivalently, the
only possibly nonzero Segre numbers are \(e_{n-1},e_n\), and
the generic L\^e vector is \((\lgen{1},\lgen{0})\).

\begin{theorem}[An equality criterion in singular dimension one]
\label{thm:curve-equality}
Under \eqref{eq:curve-hypothesis},
\begin{align}
c_1(K)&\geq c_1(I)\geq c_1(J),\label{eq:curve-sandwich}\\
(c_1(K),c_0(K))&\geq_{\mathrm{lex}}(c_1(J),c_0(J)).
\label{eq:curve-lex}
\end{align}
If \(c_1(K)=c_1(I)\), then
\begin{equation}\label{eq:curve-strong-top}
c_0(K)\geq d\,c_0(J).
\end{equation}
In that case all Segre numbers and generic L\^e numbers are
componentwise nondecreasing.
\end{theorem}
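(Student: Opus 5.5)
The plan is to specialize the general machinery already in place to \(s=1\). The statement then reduces to reading off the two surviving coefficients \(c_1\) and \(c_0\), and the equality case follows from the lexicographic theorem of Cid-Ruiz--Polini--Ulrich.

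First, for \eqref{eq:curve-sandwich}, finiteness and faithful flatness give \(\dim B/I=\dim A/J=1\), and by hypothesis \(\dim B/K=1\). Applying Proposition~\ref{prop:top-dimensional} with \(s=1\) yields \(c_1(K)\geq c_1(I)\geq c_1(J)\) directly. It also gives that \(c_i(J)\), \(c_i(I)\) and \(c_i(K)\) vanish for \(i\geq2\); this uses Theorem~\ref{thm:AM}, since \(i>q=1\). Consequently the Segre vectors of \(J,I,K\) are zero except in the last two entries \(e_{n-1}=c_1\) and \(e_n=c_0\). Theorem~\ref{thm:reduced-lex} therefore collapses to the two-entry lexicographic statement \eqref{eq:curve-lex}.

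For the equality case, I would apply Theorem~\ref{thm:ideal-lex} to the inclusion \(K\subseteq I\) from \eqref{eq:chain-rule}, which gives \(\ee(K;B)\geq_{\mathrm{lex}}\ee(I;B)\). All entries \(e_1,\ldots,e_{n-2}\) vanish for both ideals, and by assumption \(e_{n-1}(K)=c_1(K)=c_1(I)=e_{n-1}(I)\). The first possibly unequal entry is therefore \(e_n\), and the lexicographic inequality forces
\[
c_0(K)=e_n(K;B)\geq e_n(I;B)=c_0(I).
\]
Combining this with the exact degree formula \eqref{eq:j-degree} of Theorem~\ref{thm:flat-segre}, namely \(e_n(I;B)=d\,e_n(J;A)\), gives \(c_0(K)\geq d\,c_0(J)\), which is \eqref{eq:curve-strong-top}. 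The point to get right is that lexicographic order, rather than any ideal-inclusion monotonicity, is what transfers information to \(c_0\). This works only because the earlier entries are forced to be equal, which uses both the vanishing of high-index coefficients and the hypothesis \(c_1(K)=c_1(I)\).

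Finally, componentwise monotonicity in the equality case follows. We have \(c_1(K)\geq c_1(J)\) from \eqref{eq:curve-sandwich} and \(c_0(K)\geq d\,c_0(J)\geq c_0(J)\), since \(d\geq1\) and \(c_0(J)\geq0\). The remaining coefficients are zero on both sides. Translating via \eqref{eq:segre-c} and \eqref{eq:le-segre-c} gives the statements for \(e_j\) and \(\lgen{i}\). By Lemma~\ref{lem:units}, the conclusion holds for the original \(h\) as well as for the normalized equation.
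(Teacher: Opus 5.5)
Your proposal is correct and follows the paper's proof essentially step by step. The sandwich and the two-entry lexicographic inequality come from Proposition~\ref{prop:top-dimensional} and Theorem~\ref{thm:reduced-lex}, and the equality case comes from applying Theorem~\ref{thm:ideal-lex} to \(K\subseteq I\) together with \eqref{eq:j-degree}. Your explicit check of the final componentwise claim, using \(d\geq1\) and \(c_0(J)\geq0\), fills in a step the paper leaves implicit.
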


\begin{proof}
Equations \eqref{eq:curve-sandwich} and \eqref{eq:curve-lex}
follow from Proposition~\ref{prop:top-dimensional} and
Theorem~\ref{thm:reduced-lex}. Apply Theorem~\ref{thm:ideal-lex}
to \(K\subseteq I\). All earlier Segre coordinates vanish.
If \(c_1(K)=c_1(I)\), lexicographic monotonicity therefore
forces
\[
c_0(K)\geq c_0(I)=d\,c_0(J),
\]
where the last equality is \eqref{eq:j-degree}.
\end{proof}

\begin{corollary}[Componentwise cases in singular dimension one]
\label{cor:curve-cases}
Under \eqref{eq:curve-hypothesis}, assume at least one of:
\begin{enumerate}
\item \(c_1(J(h))=c_1(J(g))\);
\item \(J(g)\) is equimultiple;
\item no irreducible component of \(\Sigma(h)\) is contained
in \(R_F=V(\det DF)\).
\end{enumerate}
Then all Segre numbers and generic L\^e numbers are
componentwise nondecreasing. Moreover,
\[
\lgen{0}(h)=e_n(J(h))\geq d\,e_n(J(g))=d\,\lgen{0}(g).
\]
In condition (3), the assumption that the pullback divisor is
reduced may be replaced by the weaker assumption that its
reduced support is irreducible.
\end{corollary}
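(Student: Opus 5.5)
The plan is to reduce each of the three conditions to the equality \(c_1(K)=c_1(I)\), so that Theorem~\ref{thm:curve-equality} applies. The one exception is condition (2), which is handled by Theorem~\ref{thm:equimultiple}. Throughout, \(J=J(g)\), \(I=JB\), \(K=J(h)\) as in \eqref{eq:chain-rule}, and only \(c_1,c_0\) can be nonzero.

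\emph{Condition (1).} The sandwich \eqref{eq:curve-sandwich} reads \(c_1(K)\geq c_1(I)\geq c_1(J)\). If the outer terms agree, then \(c_1(K)=c_1(I)\). Theorem~\ref{thm:curve-equality} then gives componentwise monotonicity and \(c_0(K)\geq d\,c_0(J)\), which is the displayed inequality \(\lgen{0}(h)\geq d\,\lgen{0}(g)\) via \eqref{eq:segre-c} and \eqref{eq:le-segre-c}.

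\emph{Condition (2).} Here I invoke Theorem~\ref{thm:equimultiple} with \(s=1\) for the componentwise statement. For the degree bound, Theorem~\ref{thm:AM} gives \(c_0(J)=0\), since \(J\) is equimultiple and \(q=1\). The inequality \(c_0(K)\geq d\cdot 0\) is then trivial.

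\emph{Condition (3).} This is the substantive case, and again I aim to prove \(c_1(K)=c_1(I)\) using the associativity formula \eqref{eq:AM-assoc}. I would proceed in three steps.

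First, I compare the one-dimensional components of \(V(K)=\Sigma(h)\) and \(V(I)=F^{-1}(\Sigma(g))\). By the proof of Lemma~\ref{lem:first-coefficient}, every one-dimensional minimal prime of \(I\) is a minimal prime of \(K\). Conversely, let \(Z\) be a component of \(\Sigma(h)\). By (3), \(Z\not\subseteq R_F\), so a generic point \(x\in Z\) lies off \(R_F\). Near \(x\), \(F\) is a local biholomorphism and \(g\circ F=h\) locally, so \(g\) is singular at \(F(x)\), and hence \(x\in V(I)\). Thus \(Z\subseteq V(I)\), and the two ideals have the same one-dimensional minimal primes \(\mathfrak p\).

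Second, for each such \(\mathfrak p\) we have \(\det DF\notin\mathfrak p\), so \(DF\) is invertible over \(B_{\mathfrak p}\). Inverting the chain rule \eqref{eq:chain-rule} gives \(KB_{\mathfrak p}=IB_{\mathfrak p}\). The localized multiplicities therefore agree term by term, and \eqref{eq:AM-assoc} gives \(c_1(K)=c_1(I)\). Theorem~\ref{thm:curve-equality} then finishes the case.

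Third, for the final sentence, assume only \(g\circ F=uh^r\) with \(V(h)\) irreducible. I claim \(r=1\) is forced. At a point of \(V(h)\setminus R_F\), \(F\) is locally biholomorphic, so \(g\circ F\) is locally reduced there, which is impossible if \(r>1\). Hence \(r>1\) would give \(V(h)\subseteq R_F\), and then \(\Sigma(h)\subseteq R_F\), contradicting (3). So the argument above applies unchanged.

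The main obstacle I anticipate is the first step of (3): matching the components of \(\Sigma(h)\) with those of \(F^{-1}(\Sigma(g))\). It relies on the local biholomorphism argument at generic points and on the inclusion \(\Sigma(h)\subseteq V(h)\) recorded in Section~\ref{sec:definitions}.
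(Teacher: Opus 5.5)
Your proposal is correct and follows the paper's proof essentially line by line: (1) via the sandwich \eqref{eq:curve-sandwich}, (2) via Theorem~\ref{thm:equimultiple} together with \(c_0(J)=0\), and (3) via invertibility of \(DF\) over \(B_{\mathfrak p}\), the equality \(KB_{\mathfrak p}=IB_{\mathfrak p}\) and \eqref{eq:AM-assoc}, plus the same local-biholomorphism argument forcing \(r=1\). The only difference is that you match the curve components of \(K\) and \(I\) with a separate pointwise argument, whereas the paper reads this off from \(KB_{\mathfrak q}=IB_{\mathfrak q}\): this ideal is proper, so it already forces \(I\subseteq\mathfrak q\); both routes are valid.
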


\begin{proof}
In case (1), \eqref{eq:curve-sandwich} forces
\(c_1(K)=c_1(I)\), so Theorem~\ref{thm:curve-equality} applies.
In case (2), \(c_0(J)=0\) by Theorem~\ref{thm:AM}; apply
Theorem~\ref{thm:equimultiple} and nonnegativity.

In case (3), at a prime \(\mathfrak q\) defining a curve
component of \(\Sigma(h)\), the matrix \(DF\) is invertible
over \(B_{\mathfrak q}\). The chain rule gives
\(K B_{\mathfrak q}=I B_{\mathfrak q}\). Consequently
\(K\) and \(I\) have the same curve components and the same
local multiplicities on them. Formula~\eqref{eq:AM-assoc}
gives \(c_1(K)=c_1(I)\), and
Theorem~\ref{thm:curve-equality} applies.

For the last assertion, write \(g\circ F=uh^r\). At a point
of \(V(h)\) outside \(R_F\), the map is a local
biholomorphism and the pullback of a reduced divisor is reduced.
Condition (3) supplies such points arbitrarily close to the
origin. Since \(V(h)\) is irreducible, the exponent \(r\)
is constant along it, and hence \(r=1\).
\end{proof}

\begin{remark}[What lexicographic comparison leaves open]
\label{rem:curve-gap}
If \(c_1(K)>c_1(I)\), lexicographic monotonicity for
\(K\subseteq I\) imposes no inequality on \(c_0(K)\).
In particular, \eqref{eq:curve-lex} alone does not prove
\(\lgen{0}(h)\geq\lgen{0}(g)\). This is the remaining
coordinate in singular dimension one outside the cases above.
\end{remark}

\begin{remark}[The total associated-graded multiplicity]
For an ideal \(Q\) with \(\dim R/Q=1\),
\cite[Theorem~3(4)]{AM} gives
\[
\eHS(G_Q(R))=c_0(Q;R)+c_1(Q;R),
\]
where the multiplicity is taken at the homogeneous maximal
ideal of the associated graded ring. Thus, for two such
ideals \(K,L\), the assertion \(c_0(K)\geq c_0(L)\) is
equivalent to
\[
\eHS(G_K(R))-\eHS(G_L(R))\geq c_1(K)-c_1(L).
\]
This is a useful reformulation, not an additional monotonicity
theorem: the required lower bound on the left still needs proof.
\end{remark}

\begin{proposition}[A conditional reduction using isolated perturbations]
\label{prop:ILM}
Let \(g,h\) have one-dimensional critical loci, and choose
generic coordinates \(\mathbf z\) for \(g\) and
\(\mathbf w\) for \(h\). Suppose that, for all sufficiently
large integers \(N\), isolated perturbations satisfying the
L\^e--Iomdine formula also satisfy
\[
\mu(h+b_N w_1^N)\geq\mu(g+a_N z_1^N),
\qquad a_N,b_N\ne0.
\]
Then
\[
(\lgen{1}(h),\lgen{0}(h))
\geq_{\mathrm{lex}}(\lgen{1}(g),\lgen{0}(g)).
\]
\end{proposition}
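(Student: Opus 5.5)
The idea is to read both Milnor numbers off the Lê--Iomdine formula and compare them in the limit \(N\to\infty\). For a function \(f\) with one-dimensional critical locus, in generic coordinates \(\mathbf w\), and for all sufficiently large \(N\) and all \(b\ne0\), the Lê--Iomdine formula says that \(f+b\,w_1^N\) has an isolated critical point with
\[
\mu(f+b\,w_1^N)=\lambda^0_{f,\mathbf w}(0)+(N-1)\,\lambda^1_{f,\mathbf w}(0).
\]
Because the coordinates are generic, Definition~\ref{def:generic-le} identifies these Lê numbers with \(\lgen{0}(f)\) and \(\lgen{1}(f)\). We therefore first fix coordinates \(\mathbf z\) in \(\mathcal U_g\) and \(\mathbf w\) in \(\mathcal U_h\). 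We then take \(N\) larger than both Iomdine thresholds, so that the hypothesis applies and the formula holds for both \(g\) and \(h\).

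Substituting into the assumed inequality gives, for all large \(N\),
\[
\lgen{0}(h)+(N-1)\lgen{1}(h)\geq\lgen{0}(g)+(N-1)\lgen{1}(g).
\]
This is an affine inequality in \(N\). Dividing by \(N-1\) and letting \(N\to\infty\) yields \(\lgen{1}(h)\geq\lgen{1}(g)\).

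If the inequality is strict, the lexicographic comparison holds immediately. If \(\lgen{1}(h)=\lgen{1}(g)\), the \(N\)-dependent terms cancel and we obtain \(\lgen{0}(h)\geq\lgen{0}(g)\), which again gives the lexicographic inequality. Since \(\lgen{i}=0\) for \(i\geq2\) when the critical locus has dimension one, this is exactly the claim.

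The main obstacle is bookkeeping rather than mathematics: we must check that the Lê--Iomdine formula holds with the generic Lê numbers, and not merely with those of some prepolar system. This is where the standing distinction in the paper between prepolar and generic coordinates matters. Generic coordinates are prepolar, so the formula applies in them and produces the generic values.

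We also have to confirm that the hypothesis concerns precisely these Iomdine perturbations, with \(w_1\) the first generic coordinate. A minor point is that the formula is valid only for \(N\) beyond a threshold depending on the polar data. We handle this by taking \(N\) larger than the maximum of the two thresholds, which the assumption "for all sufficiently large \(N\)" permits.
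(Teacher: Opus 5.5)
Your proposal is correct and is essentially the paper's argument: substitute the L\^e--Iomdine formula \(\mu(f+a z_1^N)=\lgen{0}(f)+(N-1)\lgen{1}(f)\) for \(f=g,h\) in generic coordinates, subtract, let \(N\) grow to get \(\lgen{1}(h)\geq\lgen{1}(g)\), and compare the constant terms \(\lgen{0}\) when the first entries agree. Your extra care in taking \(N\) beyond both thresholds and choosing coordinates in \(\mathcal U_g\) and \(\mathcal U_h\) makes explicit what the paper leaves implicit.
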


\begin{proof}
The L\^e--Iomdine formula \cite[Theorem~3.5]{Massey24},
for sufficiently large exponents and suitable nonzero
coefficients, gives
\[
\mu(f+a_N z_1^N)=\lgen{0}(f)+(N-1)\lgen{1}(f).
\]
Subtract the formulas for \(h\) and \(g\). Nonnegativity
for arbitrarily large \(N\) first implies
\(\lgen{1}(h)\geq\lgen{1}(g)\). If these entries agree,
it also implies \(\lgen{0}(h)\geq\lgen{0}(g)\).
\end{proof}

\begin{remark}
The perturbation inequality in Proposition~\ref{prop:ILM}
is an additional hypothesis. The two perturbations need not
be related by a finite pullback, even when \(g\) and \(h\)
are. Also, the pullback of a generic linear coordinate can
have zero differential at the origin. Therefore this
proposition does not prove the missing componentwise case.
\end{remark}

\subsection{A comparison allowing nongeneric prepolar coordinates}

When \(\dim\Sigma(f)=1\), the one-dimensional L\^e cycle is
intrinsic:
\begin{equation}\label{eq:intrinsic-le-cycle}
\Lambda_f^1=\sum_{C\subset\Sigma(f)}\mu_C^\perp(f)[C].
\end{equation}
The sum runs over the irreducible curve components through the
origin, and \(\mu_C^\perp(f)\) is the Milnor number of a
generic transverse slice at a general point of \(C\)
\cite{Massey95,MasseyNotes}. Thus
\[
\lgen{1}(f)=\mult_0\Lambda_f^1.
\]
For a prepolar coordinate system \(\mathbf z\), define its
intersection excess by
\begin{equation}\label{eq:coordinate-excess}
\delta_f(\mathbf z)=
\sum_C\mu_C^\perp(f)
\bigl((C\cdot V(z_1))_0-\mult_0 C\bigr)\geq0.
\end{equation}
The local intersection inequality gives nonnegativity. The
definition of \(\lambda^1\), followed by the Euler characteristic
formula for L\^e numbers \cite[Theorem~4.1]{MasseyNotes}, gives
\begin{equation}\label{eq:prepolar-excess}
\lambda^1_{f,\mathbf z}(0)=\lgen{1}(f)+\delta_f(\mathbf z),
\qquad
\lambda^0_{f,\mathbf z}(0)=\lgen{0}(f)+\delta_f(\mathbf z).
\end{equation}
Indeed, for a one-dimensional critical locus the difference
\(\lambda^0_{f,\mathbf z}(0)-\lambda^1_{f,\mathbf z}(0)\)
is independent of a prepolar choice. This explains precisely
why admissible numerical values need not be generic ones.

\begin{corollary}[A prepolar comparison with zero target excess]
\label{cor:prepolar}
Assume \eqref{eq:curve-hypothesis}, and assume that \(J(g)\)
is equimultiple. Let \(\mathbf z\) be prepolar for \(g\),
with \(\delta_g(\mathbf z)=0\), and let \(\mathbf w\) be
any prepolar system for \(h\). Then
\[
\lambda^i_{h,\mathbf w}(0)\geq\lambda^i_{g,\mathbf z}(0),
\qquad i=0,1.
\]
\end{corollary}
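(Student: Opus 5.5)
The proof reduces both prepolar values to generic ones via \eqref{eq:prepolar-excess} and then invokes the componentwise comparison for an equimultiple target Jacobian ideal. On the target side, the hypothesis \(\delta_g(\mathbf z)=0\) turns \eqref{eq:prepolar-excess} into
\[
\lambda^1_{g,\mathbf z}(0)=\lgen{1}(g),\qquad \lambda^0_{g,\mathbf z}(0)=\lgen{0}(g).
\]
On the source side, \eqref{eq:prepolar-excess} applied to \(h\) with the arbitrary prepolar system \(\mathbf w\) gives
\[
\lambda^i_{h,\mathbf w}(0)=\lgen{i}(h)+\delta_h(\mathbf w)\geq\lgen{i}(h),\qquad i=0,1,
\]
since \(\delta_h(\mathbf w)\geq0\) by the local intersection inequality recorded after \eqref{eq:coordinate-excess}. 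Here \(\dim\Sigma(h)=1\) by \eqref{eq:curve-hypothesis}, so the intrinsic description \eqref{eq:intrinsic-le-cycle} and the excess formula apply to \(h\) as well as to \(g\).

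It therefore suffices to show \(\lgen{i}(h)\geq\lgen{i}(g)\) for \(i=0,1\). This is case (2) of Corollary~\ref{cor:curve-cases}, or equivalently Theorem~\ref{thm:equimultiple} with \(s=1\), using \eqref{eq:curve-hypothesis} and the equimultiplicity of \(J(g)\). Concretely, Theorem~\ref{thm:AM} gives \(\lgen{0}(g)=c_0(J(g))=0\). The inequality at index \(0\) is then trivial by nonnegativity. The inequality at index \(1\) is Proposition~\ref{prop:top-dimensional}, translated by \eqref{eq:le-segre-c}. Chaining the three displays completes the argument.

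The main obstacle is making sure \eqref{eq:prepolar-excess} is legitimately applicable to both functions. That identity is stated for a prepolar system when the critical locus is one-dimensional. The subtle point is that the same \(\delta\) appears in both \(\lambda^1\) and \(\lambda^0\), which rests on the prepolar invariance of \(\lambda^0-\lambda^1\). Both facts are already recorded in the text above, so I would simply cite them, with no new computation needed. I would also note the degenerate situation \(\lgen{0}(g)=0\), which equimultiplicity forces, so that the \(i=0\) inequality needs no input from the pullback at all.
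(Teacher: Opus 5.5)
Your proposal is correct and follows essentially the same route as the paper: use \eqref{eq:prepolar-excess} with \(\delta_g(\mathbf z)=0\) to make the target values generic, use equimultiplicity to get \(\lgen{0}(g)=0\) so that index zero follows from nonnegativity, and chain \(\lambda^1_{h,\mathbf w}(0)\geq\lgen{1}(h)\geq\lgen{1}(g)=\lambda^1_{g,\mathbf z}(0)\) via Proposition~\ref{prop:top-dimensional}. The only cosmetic difference is at index zero: the paper uses nonnegativity of \(\lambda^0_{h,\mathbf w}(0)\) directly, whereas you pass through \(\lgen{0}(h)\), which is harmless but unnecessary.
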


\begin{proof}
Equimultiplicity gives \(\lgen{0}(g)=0\); by
\eqref{eq:prepolar-excess} and \(\delta_g(\mathbf z)=0\),
\(\lambda^0_{g,\mathbf z}(0)=0\) and
\(\lambda^1_{g,\mathbf z}(0)=\lgen{1}(g)\).
The index-zero inequality follows from nonnegativity. For
index one, use \eqref{eq:prepolar-excess} and
Proposition~\ref{prop:top-dimensional}:
\[
\lambda^1_{h,\mathbf w}(0)\geq\lgen{1}(h)
\geq\lgen{1}(g)=\lambda^1_{g,\mathbf z}(0).
\]
\end{proof}

\section{A conditional cycle comparison with higher ramification}
\label{sec:transversal}

Return to \(g\circ F=uh^r\), with \(r\geq1\), and suppose
both critical loci have dimension one. Write
\[
\Sigma(g)=\bigcup_\alpha C_\alpha,\qquad
\Sigma(h)=\bigcup_\beta D_\beta
\]
for their curve components. Unlike the reduced-pullback case,
the inclusion \(J(h)\subseteq J(g)B\) is not available.
The following result isolates sufficient geometric hypotheses.

\begin{proposition}[Transversal comparison and pushforward]
\label{prop:transversal}
Assume the following conditions.
\begin{enumerate}
\item Every \(C_\alpha\) is the image of at least one
\(D_\beta\).
\item Whenever \(F(D_\beta)=C_\alpha\), at general points
\(p\in D_\beta\), \(q=F(p)\in C_\alpha\), there exist
smooth transverse slices \(S_p,T_q\), of dimension \(n-1\),
such that \(F(S_p)\subseteq T_q\) and
\(F|_{S_p}:(S_p,p)\to(T_q,q)\) is finite. The restricted
functions \(h|_{S_p}\), \(g|_{T_q}\) are reduced with
isolated critical points and compute their respective generic
transversal Milnor numbers.
\end{enumerate}
Then
\begin{equation}\label{eq:transverse-mu}
\mu_{D_\beta}^\perp(h)\geq\mu_{C_\alpha}^\perp(g)
\quad\text{whenever }F(D_\beta)=C_\alpha,
\end{equation}
and there is an inequality of effective cycles
\begin{equation}\label{eq:cycle-pushforward}
F_*\Lambda_h^1\geq\Lambda_g^1.
\end{equation}
For a general target linear form \(L\),
\begin{equation}\label{eq:pullback-cut}
\bigl(\Lambda_h^1\cdot V(L\circ F)\bigr)_0
\geq\bigl(\Lambda_g^1\cdot V(L)\bigr)_0
=\lgen{1}(g).
\end{equation}
\end{proposition}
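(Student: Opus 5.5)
The proof has three steps, taken in the order stated.

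\textbf{Step 1: transversal Milnor numbers.} For \eqref{eq:transverse-mu}, fix \(D_\beta\) with \(F(D_\beta)=C_\alpha\), a general point \(p\in D_\beta\), and \(q=F(p)\). Take the slices \(S_p,T_q\) of hypothesis (2). Restricting \(g\circ F=uh^r\) to \(S_p\) gives
\[
g|_{T_q}\circ F|_{S_p}=u|_{S_p}\,(h|_{S_p})^r .
\]
This is a single-power identity for a finite map between smooth germs of dimension \(n-1\), and the restricted functions are reduced with isolated critical points. By Remark~\ref{rem:isolated-scope}, Theorem~\ref{thm:isolated} applies without irreducibility, so
\[
\mu(h|_{S_p})\geq\mu(g|_{T_q}).
\]
By hypothesis (2) these Milnor numbers are \(\mu^\perp_{D_\beta}(h)\) and \(\mu^\perp_{C_\alpha}(g)\). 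The one point to check is that \(h|_{S_p}\) and \(g|_{T_q}\) vanish at \(p\) and \(q\) and that \(u|_{S_p}\) is still a unit. Both hold because \(p\in V(h)\) and \(u(p)\neq0\).

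\textbf{Step 2: the pushforward inequality.} Write \(F_*[D_\beta]=\deg(F|_{D_\beta})\,[F(D_\beta)]\), with \(\deg(F|_{D_\beta})\geq1\). Every \(D_\beta\) lies in \(F^{-1}(V(g))\), and a finite map sends curves to curves, so each \(F(D_\beta)\) is a curve inside \(V(g)\). We need \(F(D_\beta)\subseteq\Sigma(g)\) only for the terms that matter; components with image outside \(\Sigma(g)\) contribute nonnegatively anyway. Using \eqref{eq:intrinsic-le-cycle},
\[
F_*\Lambda_h^1=\sum_\beta \mu^\perp_{D_\beta}(h)\deg(F|_{D_\beta})\,[F(D_\beta)].
\]
For each \(\alpha\), hypothesis (1) supplies some \(\beta\) with \(F(D_\beta)=C_\alpha\). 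By Step 1 that single term already dominates \(\mu^\perp_{C_\alpha}(g)[C_\alpha]\), and all other terms are effective. This gives \eqref{eq:cycle-pushforward}.

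\textbf{Step 3: the cut by a general form.} For \eqref{eq:pullback-cut}, use the projection formula for a finite map and a Cartier divisor:
\[
\bigl(\Lambda_h^1\cdot F^*V(L)\bigr)_0=\bigl(F_*\Lambda_h^1\cdot V(L)\bigr)_0 .
\]
This is valid since \(F^{-1}(0)=\{0\}\) and \(L\) is general, so \(V(L)\) meets \(F_*\Lambda^1_h\) properly at \(0\). Effectivity from Step 2, together with proper intersection, gives
\[
\bigl(F_*\Lambda_h^1\cdot V(L)\bigr)_0\geq\bigl(\Lambda_g^1\cdot V(L)\bigr)_0 .
\]
For general \(L\) we have \((C_\alpha\cdot V(L))_0=\mult_0C_\alpha\), so the right side equals \(\mult_0\Lambda^1_g=\lgen{1}(g)\) by the identification preceding \eqref{eq:coordinate-excess}.

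The main obstacle is Step 1. Theorem~\ref{thm:isolated} requires a single-power identity on the slice, and we must make sure that restricting to \(S_p\) does not break \(h|_{S_p}\) into factors with different exponents. Since \(h|_{S_p}\) is assumed reduced, the identity above is exactly of the form covered by Remark~\ref{rem:isolated-scope}, so this is where I would verify carefully that the remark's reduced, possibly reducible version of both proofs applies.
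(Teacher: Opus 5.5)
Your proposal is correct and follows the paper's proof essentially step by step. You restrict $g\circ F=uh^r$ to the compatible slices and invoke the reduced, possibly reducible, isolated comparison of Remark~\ref{rem:isolated-scope} to get \eqref{eq:transverse-mu}; you then compute $F_*\Lambda_h^1$ coefficientwise from \eqref{eq:intrinsic-le-cycle}, using hypothesis (1) and effectivity of the remaining terms, to get \eqref{eq:cycle-pushforward}; and you obtain \eqref{eq:pullback-cut} from the projection formula, which is valid because $F^{-1}(0)=\{0\}$, together with a general $L$ computing $\mult_0\Lambda_g^1$.
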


\begin{proof}
Restriction to the compatible slices gives
\[
(g|_{T_q})\circ(F|_{S_p})=(u|_{S_p})(h|_{S_p})^r.
\]
Apply the isolated comparison in the reduced, possibly
reducible, form explained in Remark~\ref{rem:isolated-scope}.
This proves \eqref{eq:transverse-mu}.

Let \(d_\beta\geq1\) be the generic degree of the finite
curve map \(D_\beta\to C_\alpha\). Formula
\eqref{eq:intrinsic-le-cycle} yields
\[
F_*\Lambda_h^1=
\sum_\alpha\left(
\sum_{F(D_\beta)=C_\alpha}
d_\beta\mu_{D_\beta}^\perp(h)\right)[C_\alpha]+E,
\]
where \(E\) is the effective cycle contributed by components
whose images are not components of \(\Sigma(g)\). By
condition (1), each inner sum is nonempty; by
\eqref{eq:transverse-mu}, its coefficient is at least
\(\mu_{C_\alpha}^\perp(g)\). This proves
\eqref{eq:cycle-pushforward}.

Choose \(L\) general enough to meet every image curve
properly and to compute \(\mult_0\Lambda_g^1\). The
projection formula for finite maps gives
\[
\bigl(\Lambda_h^1\cdot V(L\circ F)\bigr)_0
=\bigl(F_*\Lambda_h^1\cdot V(L)\bigr)_0,
\]
because the germ of \(F^{-1}(0)\) is supported at the origin.
Now use \eqref{eq:cycle-pushforward}.
\end{proof}

\begin{corollary}[An additional condition at the origin]
\label{cor:transversal-origin}
Under the hypotheses of Proposition~\ref{prop:transversal},
suppose that one can choose \(L\) as there with
\begin{equation}\label{eq:cut-realizes-multiplicity}
\bigl(\Lambda_h^1\cdot V(L\circ F)\bigr)_0
=\mult_0\Lambda_h^1.
\end{equation}
Then \(\lgen{1}(h)\geq\lgen{1}(g)\). If in addition
\(J(g)\) is equimultiple, all Segre numbers and generic
L\^e numbers are componentwise nondecreasing, for any \(r\).
\end{corollary}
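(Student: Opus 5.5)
The first assertion follows by chaining \eqref{eq:cut-realizes-multiplicity} with \eqref{eq:pullback-cut}. By Proposition~\ref{prop:transversal}, for a general target linear form \(L\) we have
\(\bigl(\Lambda_h^1\cdot V(L\circ F)\bigr)_0\geq\lgen{1}(g)\).
By hypothesis, \(L\) can be chosen so that the left side equals \(\mult_0\Lambda_h^1\). The only point to check is compatibility of the two genericity requirements. Both conditions on \(L\) are needed. The first is the open condition of Proposition~\ref{prop:transversal}: \(L\) meets every image curve properly and computes \(\mult_0\Lambda_g^1\). The second is the realizing condition \eqref{eq:cut-realizes-multiplicity}, which is assumed to be achievable by a choice ``as there''. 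I read the hypothesis as asserting exactly that one \(L\) satisfies both, so no further intersection argument is needed. Then \(\mult_0\Lambda_h^1=\lgen{1}(h)\) by the intrinsic description \eqref{eq:intrinsic-le-cycle}, valid because \(\dim\Sigma(h)=1\). This gives \(\lgen{1}(h)\geq\lgen{1}(g)\).

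For the second assertion, assume \(J(g)\) is equimultiple. Since \(\dim\Sigma(g)=1\), we have \(\htop J(g)=n-1=\ell(J(g))\). Theorem~\ref{thm:AM} then gives \(c_i(J(g))=0\) for \(i\neq1\), so \(c_0(J(g))=\lgen{0}(g)=0\) and all higher coefficients vanish. By Remark~\ref{rem:generic-identification}, \(c_1(J(g))=\lgen{1}(g)\). For \(h\), the coefficients \(c_i(J(h))\) with \(i>1\) vanish by \eqref{eq:AM-vanishing}, and every coefficient is nonnegative. So the generic L\^e vectors reduce to \((\lgen{1},\lgen{0})\). Coordinate \(1\) is handled by the first part. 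Coordinate \(0\) holds since \(\lgen{0}(h)\geq0=\lgen{0}(g)\). All other coordinates vanish on both sides. Translating through \eqref{eq:segre-c} and \eqref{eq:le-segre-c} gives componentwise monotonicity of the Segre numbers.

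This argument never uses \(J(h)\subseteq J(g)B\). It uses only the cycle-level comparison of Proposition~\ref{prop:transversal}, whose proof used the isolated comparison valid for all \(r\) (Theorem~\ref{thm:isolated} with Remark~\ref{rem:isolated-scope}). So the conclusion holds for any \(r\). The only delicate step is the simultaneous choice of \(L\) in the first paragraph. If the hypothesis is read as allowing an \(L\) outside the general set of Proposition~\ref{prop:transversal}, I would still need to show that \eqref{eq:pullback-cut} holds for that \(L\). To do this I would shrink to the intersection of the two nonempty Zariski-open conditions on \(L\). The condition \eqref{eq:cut-realizes-multiplicity} is itself open on the set where it is attainable, since intersection multiplicity is upper semicontinuous and bounded below by the multiplicity.
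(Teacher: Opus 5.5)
Your proposal is correct and follows the paper's own argument. You chain \eqref{eq:pullback-cut} with \eqref{eq:cut-realizes-multiplicity} and \(\lgen{1}(h)=\mult_0\Lambda_h^1\); then equimultiplicity of \(J(g)\) leaves only \(c_1(J(g))\) nonzero, and nonnegativity finishes the proof. Your closing observation, that the realizing condition is Zariski-open and so can be intersected with the general set of Proposition~\ref{prop:transversal}, is a harmless extra safeguard that the paper does not need, because it reads ``as there'' exactly as you do.
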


\begin{proof}
The first assertion follows from \eqref{eq:pullback-cut}
and \eqref{eq:cut-realizes-multiplicity}. Equimultiplicity
of \(J(g)\) makes \(\lgen{0}(g)=0\); all other indices
except one vanish. Nonnegativity proves the second assertion.
\end{proof}

\begin{remark}[Why the extra condition is essential to the argument]
A sufficient condition for \eqref{eq:cut-realizes-multiplicity}
is that the linear part of \(L\circ F\) be general for the
tangent cone of \(\Lambda_h^1\). Finiteness alone does not
ensure this. In general \(L\circ F\) is neither a linear
form nor a local coordinate, and its intersection with the
source cycle may exceed the cycle multiplicity. Therefore
\eqref{eq:cycle-pushforward} and \eqref{eq:pullback-cut}
alone do not compare the generic L\^e numbers at the origin.
The compatible smooth slices in condition (2) are also genuine
extra hypotheses, not an automatic property of a finite map.
\end{remark}

\section{Examples and limits of the comparison statements}
\label{sec:examples}

\begin{example}[Equimultiple examples in every singular dimension]
\label{ex:equimultiple}
Fix \(n\geq2\), \(0\leq s\leq n-2\), and put \(m=n-s\).
With \(s\) additional variables \(\mathbf z\), let
\begin{align*}
g(\mathbf x,\mathbf z)&=x_1^2+x_2^2+\cdots+x_{m-1}^2+x_m^3,\\
F(\mathbf u,\mathbf w)&=(u_1^2,u_2,\ldots,u_m,\mathbf w),\\
h(\mathbf u,\mathbf w)&=u_1^4+u_2^2+\cdots+u_{m-1}^2+u_m^3.
\end{align*}
Then \(d=2\), \(g\circ F=h\), and both hypersurfaces are
irreducible. For \(m=2\) this follows from the coprime
exponent pairs \((2,3)\), \((4,3)\); for \(m>2\) it follows
because the transverse hypersurfaces have isolated singularities
and dimension at least two. Both singular loci are smooth of
dimension \(s\).

The Jacobian ideals are complete intersections of height \(m\):
\[
J(g)=(x_1,\ldots,x_{m-1},x_m^2),\qquad
J(h)=(u_1^3,u_2,\ldots,u_{m-1},u_m^2).
\]
They are equimultiple. The transverse quotient lengths are
\(2\) and \(6\), while the singular-locus multiplicities are
one. Theorem~\ref{thm:AM} gives
\[
c_s(J(g))=2,\qquad c_s(J(h))=6,
\qquad c_i(J(g))=c_i(J(h))=0\quad(i\ne s).
\]
Thus \(\lgen{s}\) increases from \(2\) to \(6\), and all
other generic L\^e numbers vanish. If \(s>0\), the entire
singular locus upstairs lies in \(R_F=V(u_1)\). This example
is covered by the equimultiple theorem, not by the
nonramification condition of Corollary~\ref{cor:curve-cases}.
\end{example}

\begin{example}[Degree can occur entirely in the singular direction]
\label{ex:tangential-degree}
In \(\CC^3\), take
\[
g(x,y,z)=x^2+y^3,\qquad F(u,v,w)=(u,v,w^d),\qquad d>1.
\]
The reduced pullback is \(h=u^2+v^3\), so
\[
\ee(J(g))=\ee(J(h))=(0,2,0),\qquad
(\lgen{1},\lgen{0})=(2,0).
\]
Both hypersurfaces are irreducible and both singular loci are
one-dimensional. Thus neither a factor \(d\) nor a factor
\(d/r\) can be inserted in every coordinate of the proposed
nonisolated comparison: here \(r=1\) and that factor would
require \(2\geq2d\). Adding unused variables gives the same
phenomenon in higher singular-locus dimension.
\end{example}

\begin{example}[A non-equimultiple pair with one-dimensional singular locus]
\label{ex:quartic}
Consider
\[
g(x,y,z)=x^4+y^4-xyz,\qquad
F(x,y,z)=(x,y,z^2),\qquad
h(x,y,z)=x^4+y^4-xyz^2.
\]
Then \(d=2\), \(g\circ F=h\), and
\(\Sigma(g)=\Sigma(h)=V(x,y)\). At a general point of this
axis, each transverse singularity is a node, so its Milnor
number is one. Therefore
\begin{equation}\label{eq:quartic-first}
\lgen{1}(g)=\lgen{1}(h)=1.
\end{equation}

Both equations are irreducible. For \(g\), a factorization
would have a factor of order one, with initial form \(x\),
\(y\), or \(z\), because its initial form is \(-xyz\).
An implicit solution \(x=\varphi(y,z)\) with
\(\operatorname{ord}\varphi\geq2\) contradicts
\(\varphi(y,0)^4+y^4=0\); the \(y\)-case is identical.
An implicit solution for \(z\) would require
\(x^4+y^4\) to be divisible by \(xy\), which it is not.
For \(h\), regard it as a primitive quadratic in \(z\):
\((x^4+y^4)/(xy)\) is not a square in \(\CC(x,y)\), since
its valuation at \(x\) is odd. Hence \(h\) is irreducible
as a homogeneous polynomial and therefore as an analytic germ.

The remaining numbers can be computed explicitly from L\^e
cycles with first coordinate \(z\). For \(g\), the residual
polar curve is the union of four reduced curves
\[
y=\zeta x,\qquad z=4\zeta^{-1}x^2,\qquad \zeta^4=1.
\]
Along each curve, \(g_z=-xy\) has order two, giving
\(\lambda^0_{g,(z,x,y)}(0)=8\). For \(h\), the residual
polar curve consists of eight reduced lines
\[
y=\zeta x,\qquad z^2=4\zeta^{-1}x^2,\qquad \zeta^4=1.
\]
On each line \(h_z=-2xyz\) has order three, giving
\(\lambda^0_{h,(z,x,y)}(0)=24\).

These coordinate choices are prepolar: their first slices are
\(x^4+y^4\), with isolated critical point. The coordinate
\(z\) meets the singular axis with multiplicity one, so the
excess in \eqref{eq:coordinate-excess} is zero. Thus the
computed numbers are the generic values, and
\begin{equation}\label{eq:quartic-numbers}
\ee(J(g))=(0,1,8),\qquad
\ee(J(h))=(0,1,24).
\end{equation}
In particular, both ideals have maximal analytic spread, and
Corollary~\ref{cor:curve-cases}(1) applies. This example
illustrates a componentwise case beyond equimultiplicity.
\end{example}

\begin{example}[Prepolar values need not be generic values]
\label{ex:prepolar-counterexample}
For \(k\geq2\), let
\[
f(x,y,z)=(x-z^k)^2+y^3.
\]
It is an irreducible reduced germ, analytically equivalent to
a cusp times a line. Its Jacobian ideal and singular locus are
\[
J(f)=(x-z^k,y^2),\qquad
C=\Sigma(f)=V(x-z^k,y).
\]
The ideal is equimultiple, \(C\) is smooth, and the
transverse Milnor number is two. Hence
\[
(\lgen{1}(f),\lgen{0}(f))=(2,0).
\]

The linear coordinate systems \(\mathbf z=(x,y,z)\) and
\(\mathbf w=(z,y,x)\) are prepolar. Their first slices are
respectively \(z^{2k}+y^3\) and \(x^2+y^3\), both with
isolated critical point. For \(\mathbf z\),
\[
\Lambda^1_{f,\mathbf z}=2[C],\qquad
\Gamma^1_{f,\mathbf z}=2(k-1)[V(y,z)].
\]
Intersecting the first cycle with \(V(x)\), and the second
with \(V(f_x)\), gives
\[
(\lambda^1_{f,\mathbf z}(0),\lambda^0_{f,\mathbf z}(0))
=(2k,2k-2).
\]
For \(\mathbf w\), the residual polar curve vanishes and
the corresponding pair is \((2,0)\).

Taking \(F=\mathrm{id}\), \(g=h=f\), using \(\mathbf z\)
on the target and \(\mathbf w\) on the source, disproves a
comparison asserted for \emph{arbitrary} prepolar choices.
It does not contradict the generic formulation: the two
generic vectors are equal.
\end{example}

\begin{remark}[Multiplying the Jacobian ideal by a power of its equation]
\label{rem:power-ideal}
There is no general identity
\(c_i(h^{r-1}J(h))=r\,c_i(J(h))\). For example, in
\(\CC^2\) let \(h=x^2+y^3\) and \(r\geq2\). Since
\(J(h)=(x,y^2)\) is primary, \(e_1(J(h))=0\). But the
divisorial Segre cycle of \(h^{r-1}J(h)\) is
\((r-1)[V(h)]\), so
\[
e_1(h^{r-1}J(h))=2(r-1)>0.
\]
This is why the factor \(h^{r-1}\) in the chain rule
cannot be removed by a uniform rescaling of the multiplicity
sequence.
\end{remark}

\section{Conjectures and remaining questions}\label{sec:conjectures}

\begin{conjecture}[Componentwise Segre monotonicity]
\label{conj:segre-monotonicity}
Let \(F:(\CC^n,0)\to(\CC^n,0)\) be finite, and let
\(V=V(g)\) be a singular irreducible reduced hypersurface.
Suppose \(W=F^{-1}(V)_{\mathrm{red}}\) is irreducible and
let \(h\) be a reduced equation for \(W\). Then
\[
e_j(J(h);\CC^n)\geq e_j(J(g);\CC^n),\qquad 1\leq j\leq n.
\]
\end{conjecture}

\begin{conjecture}[Equivalent generic L\^e formulation]
\label{conj:le-monotonicity}
Under the hypotheses of Conjecture~\ref{conj:segre-monotonicity},
\[
\lgen{i}(h)\geq\lgen{i}(g),\qquad 0\leq i\leq n-1.
\]
\end{conjecture}

The two conjectures are equivalent by
\eqref{eq:le-segre-c}; they are not independent assertions.
They hold in the isolated case by Theorem~\ref{thm:isolated}.
For reduced pullback, they hold under the common-dimension
equimultiplicity hypothesis of Theorem~\ref{thm:equimultiple}
and under the one-dimensional criteria of
Corollary~\ref{cor:curve-cases}. Under higher ramification,
Corollary~\ref{cor:transversal-origin} supplies an additional
conditional equimultiple case.

\begin{conjecture}[Lexicographic comparison with arbitrary ramification]
\label{conj:ramified-lex}
Under the hypotheses of Conjecture~\ref{conj:segre-monotonicity},
\[
\ee(J(h))\geq_{\mathrm{lex}}\ee(J(g)).
\]
\end{conjecture}

This is weaker than componentwise comparison. It is proved
here for \(r=1\) by Theorem~\ref{thm:reduced-lex}, and for
isolated singularities with arbitrary \(r\) by
Theorem~\ref{thm:isolated}. No proof for general nonisolated
singularities with \(r>1\) is claimed.

The remaining issues can be stated concretely. With reduced
pullback and one-dimensional singular loci, a strict increase
of \(c_1\) leaves the comparison of \(c_0\) unresolved outside
the sufficient conditions above. In higher singular dimension,
lexicographic monotonicity similarly leaves later coordinates
uncontrolled after the first strict increase. With higher
ramification, one must additionally overcome the factor
\(h^{r-1}\) in the chain rule or obtain compatible geometric
comparisons beyond those in Section~\ref{sec:transversal}.
Neither equality of radicals nor the projection formula for
a nongeneric cut resolves these issues.

\end{document}